\def\NZQ{\mathbb}               
\def\NN{{\NZQ N}}
\def\ZZ{{\NZQ Z}}
\def\RR{{\NZQ R}}
\def\frk{\mathfrak}               
\def\Phi{{\frk N}}
\def\opn#1#2{\def#1{\operatorname{#2}}} 
\opn\chara{char} \opn\length{\ell} \opn\pd{pd} \opn\rk{rk}
\opn\projdim{proj\,dim} \opn\injdim{inj\,dim} \opn\rank{rank}
\opn\depth{depth} \opn\grade{grade} \opn\height{height}
\opn\embdim{emb\,dim} \opn\codim{codim}
\opn\Tr{Tr} \opn\bigrank{big\,rank}
\opn\superheight{superheight}\opn\lcm{lcm}
\opn\trdeg{tr\,deg}
\opn\reg{reg} \opn\lreg{lreg} \opn\ini{in} \opn\lpd{lpd}
\opn\size{size}\opn{\mult}{mult}
\opn\div{div} \opn\Div{Div} \opn\cl{cl} \opn\Cl{Cl}
\opn\Spec{Spec} \opn\Supp{Supp} \opn\supp{supp} \opn\Sing{Sing}
\opn\Ass{Ass} \opn\Min{Min}
\opn\Ann{Ann} \opn\Rad{Rad} \opn\Soc{Soc}
\opn\Syz{Syz} \opn\Im{Im} \opn\Ker{Ker} \opn\Coker{Coker}
\opn\Am{Am} \opn\Hom{Hom} \opn\Tor{Tor} \opn\Ext{Ext}
\opn\End{End} \opn\Aut{Aut} \opn\id{id}
\opn\nat{nat}
\opn\pff{pf}
\opn\Pf{Pf} \opn\GL{GL} \opn\SL{SL} \opn\mod{mod} \opn\ord{ord}
\opn\Gin{Gin}
\opn\Hilb{Hilb}\opn\adeg{adeg}\opn\std{std}\opn\ip{infpt}
\opn\Pol{Pol}
\opn\sat{sat}
\opn\Var{Var}
\opn\Gen{Gen}
\opn\vol{vol}
\opn\aff{aff} \opn\con{conv} \opn\relint{relint} \opn\st{st}
\opn\lk{lk} \opn\cn{cn} \opn\core{core} \opn\vol{vol}
\opn\link{link} \opn\star{star}
\opn\gr{gr}
\def\Pc{{\mathcal P}}
\def\Qc{{\mathcal Q}}
\def\pot#1#2{#1[\kern-0.28ex[#2]\kern-0.28ex]}
\opn\dirlim{\underrightarrow{\lim}}
\opn\inivlim{\underleftarrow{\lim}}
\def\Implies{\ifmmode\Longrightarrow \else
        \unskip${}\Longrightarrow{}$\ignorespaces\fi}
\def\implies{\ifmmode\Rightarrow \else
        \unskip${}\Rightarrow{}$\ignorespaces\fi}
\def\iff{\ifmmode\Longleftrightarrow \else
        \unskip${}\Longleftrightarrow{}$\ignorespaces\fi}
\newtheorem{Theorem}{Theorem}[section]
\newtheorem{Corollary}[Theorem]{Corollary}
\newtheorem{Proposition}[Theorem]{Proposition}
\newtheorem{Remark}[Theorem]{Remark}
\newtheorem{Example}[Theorem]{Example}
\newtheorem{Examples}[Theorem]{Examples}
\newtheorem{Problem}[Theorem]{Problem}
\let\epsilon\varepsilon
\let\phi=\varphi
\let\kappa=\varkappa
\def\qed{\ifhmode\textqed\fi
      \ifmmode\ifinner\quad\qedsymbol\else\dispqed\fi\fi}
\def\textqed{\unskip\nobreak\penalty50
       \hskip2em\hbox{}\nobreak\hfil\qedsymbol
       \parfillskip=0pt \finalhyphendemerits=0}
\def\dispqed{\rlap{\qquad\qedsymbol}}
\opn\dis{dis}
\def\pnt{{\raise0.5mm\hbox{\large\bf.}}}
\opn\Lex{Lex}
\begin{document}
\title{  Hermite normal forms and $\delta$-vector}
\author{Takayuki Hibi, Akihiro Higashitani and Nan Li}
\address{Takayuki Hibi,
Department of Pure and Applied Mathematics,
Graduate School of Information Science and Technology,pe
Osaka University,
Toyonaka, Osaka 560-0043, Japan}
\email{hibi@math.sci.osaka-u.ac.jp}
\address{Akihiro Higashitani,
Department of Pure and Applied Mathematics,
Graduate School of Information Science and Technology,
Osaka University,
Toyonaka, Osaka 560-0043, Japan}
\email{sm5037ha@ecs.cmc.osaka-u.ac.jp}
\address{Nan Li,
Department of Mathematics,
Massachusetts Institute of Technology,
Cambridge, MA 02139, USA}
\email{nan@math.mit.edu}
\thanks{
{\bf 2010 Mathematics Subject Classification:}
Primary 52B20; Secondary 15A21. \\
\, \, \, {\bf Keywords:}
Convex polytope, Ehrhart polynomial, $\delta$-vector, Hermite normal form.
}
\begin{abstract}
Let $\delta(\Pc) = (\delta_0, \delta_1, \ldots, \delta_d)$ be the
$\delta$-vector of an integral polytope $\Pc \subset \RR^N$ of
dimension $d$. Following the previous work of characterizing the
$\delta$-vectors with $\sum_{i=0}^d \delta_i \leq 3$, the possible
$\delta$-vectors with $\sum_{i=0}^d \delta_i = 4$ will be
classified. And each possible $\delta$-vectors can be obtained by
simplices. We get this result by studying the problem of classifying
the possible integral simplices with a given $\delta$-vector
$(\delta_0, \delta_1, \ldots, \delta_d)$, where $\sum_{i=0}^d
\delta_i \leq 4$, by means of Hermite normal forms of square
matrices.
\end{abstract}

\maketitle

\section{introduction}

\subsection{Background for $\delta$-vectors}
Let $\Pc \subset \RR^N$ be an integral polytope of dimension $d$ and
$\partial \Pc$ its boundary. Define the numerical functions
$i(\Pc,n)$ and $i^*(\Pc,n)$ by setting
\[
i(\Pc,n) = |n\Pc \cap \ZZ^N|, \, \, \, \, \, \, \, \, \, \,
i^*(\Pc,n) = |n(\Pc - \partial \Pc) \cap \ZZ^N|.
\]
Here $n\Pc = \{ n\alpha : \alpha \in \Pc \}$ and $|X|$ is the
cardinality of a finite set $X$.
The systematic study of $i(\Pc,n)$ and $i^*(\Pc,n)$ originated in
Ehrhart \cite{Ehrhart} around 1955, who established the following
fundamental properties:
\begin{enumerate}
\item[(0.1)]
$i(\Pc,n)$ is a polynomial in $n$ of degree $d$;
\item[(0.2)]
$i(\Pc,0) = 1$;
\item[(0.3)]
(reciprocity law) $i^*(\Pc,n) = ( - 1 )^d i(\Pc, - n)$ for every
integer $n
> 0$.
\end{enumerate}
We say that $i(\Pc,n)$ is the {\em Ehrhart polynomial} of $\Pc$. An
introduction to Ehrhart polynomials is discussed in \cite[pp.
235--241]{StanleyEC} and \cite[Part II]{HibiRedBook}.

We define the sequence $\delta_0, \delta_1, \delta_2, \ldots$ of
integers by the formula
\begin{align}\label{delta}
(1 - \lambda)^{d + 1} \bigg( 1 + \sum_{n=1}^{\infty}
i(\Pc,n) \lambda^n \bigg)= \sum_{i=0}^{\infty} \delta_i \lambda^i.
\end{align}
In particular, $\delta_0 = 1$ and $\delta_1 = |\Pc \cap \ZZ^N| - (d +
1)$. Thus, if $\delta_1=0$, then $\Pc$ is a simplex. The
above facts (0.1) and (0.2) together with a well-known result on
generating function (\cite[Corollary 4.3.1]{StanleyEC}) guarantee
that $\delta_i = 0$ for every $i > d$. We say that the sequence
\[
\delta(\Pc) = (\delta_0, \delta_1, \ldots, \delta_d)
\]
which appears in Eq. \eqref{delta} is the {\em $\delta$-vector} of
$\Pc$ and the polynomial
\[
\delta_{\Pc}(t)=\delta_0+\delta_1t+\cdots+\delta_dt^d
\]
which also appears in Eq. \eqref{delta} is the {\em
$\delta$-polynomial} of $\Pc$.

It follows from the reciprocity law (0.3) that
\begin{align*}
(1 - \lambda)^{d + 1} \bigg( \sum_{n=1}^{\infty}
i^*(\Pc,n) \lambda^n \bigg) = \sum_{i=0}^{d} \delta_{d-i}
\lambda^{i+1}.
\end{align*}
In particular, $\delta_d = |(\Pc - \partial \Pc) \cap \ZZ^N|$. Each
$\delta_i$ is nonnegative (\cite{StanleyDRCP}).
If $\delta_d \neq 0$, then $\delta_1 \leq \delta_i$
for every $1 \leq i < d$ (\cite{HibiLBT}).

Let $s = \max\{ \, i \, : \, \delta_i \neq 0 \, \}$. Stanley
\cite{StanleyJPAA} shows that
\begin{align}\label{Stanley}
\delta_0 + \delta_1 + \cdots + \delta_i \leq
\delta_s + \delta_{s-1} + \cdots + \delta_{s-i}, \, \, \, \, \, 0
\leq i \leq \lfloor s/2 \rfloor
\end{align}
by using Cohen--Macaulay rings. The inequalities
\begin{align}\label{Hibi}
\delta_{d-1} + \delta_{d-2} + \cdots + \delta_{d-i}
\leq \delta_2 + \delta_3 + \cdots + \delta_i + \delta_{i+1}, \, \,
\, \, \, 1 \leq i \leq \lfloor (d-1)/2 \rfloor
\end{align}
appear in \cite[Remark (1.4)]{HibiLBT}.

\subsection{Main result: characterization of $\delta$-vectors with $\sum_{i=0}^{d} \delta_i =4$}
One of the most fundamental problems of enumerative combinatorics is
to find a combinatorial characterization of all vectors that can be
realized as the $\delta$-vector of some integral polytope.
For example, restrictions like $\delta_0=1$, $\delta_i\ge 0$,
\eqref{Stanley} and \eqref{Hibi} are necessary conditions for a
vector to be a $\delta$-vector of some integral polytope.

On the one hand, the complete classification of the $\delta$-vectors for dimension 2
is given essentially by Scott \cite{Scott},
while the case where the dimension is greater than or equal to 3
is presumably unknown. In \cite{smallvolume}, on the other hand,
the possible $\delta$-vectors with $\sum_{i=0}^{d} \delta_i \leq 3$
are completely classified by the inequalities \eqref{Stanley} and \eqref{Hibi}.
\begin{Theorem}\label{HHN}\cite[Theorem 0.1]{smallvolume}
Let $d \geq 3$. Given a sequence $(\delta_0,\delta_1,\ldots,\delta_d)$
of nonnegative integers, where $\delta_0=1$ and $\delta_1 \geq \delta_d$,
which satisfies $\sum_{i=0}^{d} \delta_i \leq 3$,
there exists an integral polytope $\Pc \subset \RR^d$ of dimension $d$
whose $\delta$-vector coinsides with $(\delta_0,\delta_1,\ldots,\delta_d)$
if and only if $(\delta_0,\delta_1,\ldots,\delta_d)$ satisfies
all inequalities \eqref{Stanley} and \eqref{Hibi}.
\end{Theorem}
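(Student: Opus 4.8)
The necessity of \eqref{Stanley} and \eqref{Hibi}, together with $\delta_0=1$ and $\delta_i\ge 0$, is immediate, since these hold for the $\delta$-vector of any integral polytope; so the plan is to concentrate on sufficiency. First I would reduce to constructing integral \emph{simplices}: the sum $\sum_{i=0}^d\delta_i$ equals the normalized volume of a simplex, so every admissible vector has volume at most $3$, and it suffices to exhibit a simplex of that volume with the prescribed $\delta$-vector.

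The main idea is to parametrize these simplices by Hermite normal forms. A full-dimensional $d$-simplex with a vertex at the origin is, up to a lattice-preserving transformation, recorded by the $d\times d$ integer matrix whose rows are its other vertices, and such a matrix can be brought to a lower-triangular Hermite normal form $H=(h_{ij})$ with $h_{ii}>0$ and $0\le h_{ij}<h_{jj}$. Because $\det H=\prod_i h_{ii}=\sum_{i=0}^d\delta_i\le 3$, the diagonal can have at most one entry bigger than $1$, equal to $2$ or $3$; this reduces the whole question to a short, explicit list of matrices. For each I would read off the $\delta$-vector by counting the points of $\ZZ^{d+1}$ in the half-open fundamental parallelepiped of the cone over the lifted simplex, graded by their last (``height'') coordinate.

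Then I would run through the three shapes permitted by $\sum_{i\ge1}\delta_i\le2$. Volume $1$ yields $\delta=(1,0,\dots,0)$, realized by $\con(0,e_1,\dots,e_d)$. Volume $2$ yields a single $1$ in some position $p$; here I expect the family $\con(0,e_1,\dots,e_{d-1},v)$ with $v=(c_1,\dots,c_{d-1},2)$ and $c_j\in\{0,1\}$ to work, since varying the number of nonzero $c_j$ moves the unique nontrivial parallelepiped point through every height $p$ with $1\le p\le\lfloor(d+1)/2\rfloor$, which is exactly the range left open by \eqref{Hibi} and by $\delta_1\ge\delta_d$. Volume $3$ yields either a single $2$ in position $p$ or two $1$'s in positions $p<q$; these come from a $\ZZ/3$ quotient whose two nontrivial coset representatives have heights summing to the number of fractional coordinates used, so suitable triangular matrices should realize every configuration allowed by \eqref{Stanley} and \eqref{Hibi}. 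Where convenient I would use lattice pyramids to lift a low-dimensional model up to dimension $d$, appending trailing zeros without altering the $\delta$-polynomial.

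The hard part will be the final matching: showing that the heights attainable from these Hermite-normal-form simplices are \emph{precisely} the admissible index patterns, with nothing missing and nothing extra. This comes down to comparing, case by case, the arithmetic constraints on the parallelepiped points (the parities and integrality of the coordinate sums) against the inequalities \eqref{Stanley} and \eqref{Hibi}, and the subtle cases are the boundary values such as $p=\lfloor(d+1)/2\rfloor$ and the split between the two volume-$3$ shapes. Once this correspondence is checked in every case, sufficiency, and hence the theorem, follows.
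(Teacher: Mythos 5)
Your strategy is essentially the one this paper uses: Theorem \ref{HHN} itself is quoted from \cite{smallvolume} without a proof here, but Sections 4.1 and 4.2 re-derive it by exactly the method you describe --- since the normalized volume $\sum_i\delta_i\le 3$ forces the Hermite normal form to be a ``one row'' matrix \eqref{onegen} with $D=2$ or $D=3$, one computes the heights via Corollary \ref{onegencor} and matches the feasibility constraints on $(d_1,\dots,d_{D-1})$ against \eqref{Stanley} and \eqref{Hibi}. Your volume-$2$ analysis is correct and complete: the single nontrivial height is $\lfloor d_1/2\rfloor+1$, which with $0\le d_1\le d-1$ sweeps out exactly $1\le p\le\lfloor(d+1)/2\rfloor$, the range cut out by \eqref{Hibi} together with $\delta_1\ge\delta_d$.

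The gap is in the volume-$3$ case, where you replace the computation by a heuristic (``heights summing to the number of fractional coordinates used'') and explicitly defer ``the final matching'' --- but that matching is the entire content of the sufficiency direction. Concretely, with $d_1$ ones and $d_2$ twos in the nontrivial row, the two heights are $k_1=1-\lfloor(1-d_1-2d_2)/3\rfloor$ and $k_2=1-\lfloor(2-2d_1-d_2)/3\rfloor$; to realize a prescribed pair $(i,j)$ one must invert these floor equations (this yields three integer solution families for $(d_1,d_2)$, the paper's $d^{(1)},d^{(2)},d^{(3)}$) and then check that at least one family meets $d_1,d_2\ge 0$ and $d_1+d_2\le d-1$ precisely when $(i,j)$ satisfies the inequalities coming from \eqref{Stanley} and \eqref{Hibi} (namely $2j\ge i$, $2i\ge j+1$ and $i+j\le d+1$, up to swapping the roles of $i$ and $j$), including the boundary cases $d\equiv 1,2 \; (\mod \; 3)$ in which only one of the three families survives. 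Until that verification is written out, what you have is a correct plan rather than a proof.
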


However, this is not true for $\sum_{i=0}^{d} \delta_i =4$
(\cite[Example 1.2]{smallvolume}). In this paper, we will give the
complete classification of the possible $\delta$-vectors with
$\sum_{i=0}^{d} \delta_i =4$ (see Theorem \ref{case4}). Moreover,
similarly to $\sum_{i=0}^{d} \delta_i \leq 3$, it turns out that all
the possible $\delta$-vectors with $\sum_{i=0}^{d} \delta_i =4$ can
be chosen to be integral simplices, although this does not hold when
$\sum_{i=0}^{d} \delta_i =5$ (Remark \ref{simplex}).

\subsection{Approach: a classification of integral simplices with a given $\delta$-vector}

Let $\ZZ^{d \times d}$ denote the set of $d \times d$ integral matrices.
Recall that a matrix $A \in \ZZ^{d \times d}$ is {\em unimodular} if $\det (A) = \pm 1$.
Given integral polytopes $\Pc$ and $\Qc$ in $\RR^d$ of dimension $d$,
we say that $\Pc$ and $\Qc$ are {\em unimodularly equivalent}
if there exists a unimodular matrix $U \in \ZZ^{d \times d}$
and an integral vector $w$, such that $\Qc=f_U(\Pc)+w$,
where $f_U$ is the linear transformation in $\RR^d$ defined by $U$,
i.e., $f_U({\bf v}) = {\bf v} U$ for all ${\bf v} \in \RR^d$.
Clearly, if $\Pc$ and $\Qc$ are unimodularly equivalent, then
$\delta(\Pc) = \delta(\Qc)$. Conversely, given a vector $v\in
\ZZ_{\ge 0}^{d+1}$, it is natural to ask what are all the integral
polytopes $\Pc$ under unimodular equivalence, such that
$\delta(\Pc)=v$.

In this paper, we will focus on this problem for simplices with one
vertex at the origin. In addition, we do not allow any shifts in the
equivalence, i.e., $d$-dimensional integral polytopes $\Pc$ and
$\Qc$ are equivalent if there exists a unimodular matrix $U$, such
that $\Qc=f_U(\Pc)$. By considering the $\delta$-vectors of all the
integral simplices up to this equivalence whose normalized volumes
are 4, we will get our main result Theorem \ref{case4}.

For discussing the representative under equivalence of
the integral simplices with one vertex at the origin,
we consider Hermite normal forms.

Let $\Pc$ be an integral simplex in $\RR^d$ of
dimension $d$ with the vertices ${\bf 0}, {\bf v}_1, \ldots, {\bf v}_d$.
Define $M(\Pc) \in \ZZ^{d \times d}$ to be the matrix with the row vectors
${\bf v}_1, \ldots, {\bf v}_d$. Then we have the following connection
between the matrix $M(\Pc)$ and the $\delta$-vector of $\Pc$:
$|\det(M(\Pc))|=\sum_{i\ge 0}\delta_i$.
In this setting, $\Pc$ and $\Pc'$ are equivalent if and only if
$M(\Pc)$ and $M(\Pc')$ have the same Hermite normal form,
where the \emph{Hermite normal form} of a nonsingular integral square matrix $B$ is
the unique nonnegative lower triangular matrix
$A = (a_{ij}) \in \ZZ_{\ge 0}^{d \times d}$ such that $A = BU$
for some unimodular matrix $U \in \ZZ^{d \times d}$ and
$0 \leq a_{ij} < a_{ii}$ for all $1 \leq j < i$, (see \cite[Chapter 4]{Sch}).
In other words, we can pick the Hermite normal form as the representative
in each equivalence class and study the following

\begin{Problem}\label{normalform}
{\em Given a vector $v\in \ZZ_{\ge 0}^{d+1}$, classify all possible
$d \times d$ matrices $A \in \ZZ^{d \times d}$ which are in Hermite
normal form with $\delta(\Pc) = (\delta_0, \delta_1, \ldots,
\delta_d)=v$, where $\Pc \subset \RR^d$ is the integral simplex
whose vertices are the row vectors of $A$ together with the origin
in $\RR^d$. }
\end{Problem}


%
%

\subsection{Structure of this paper}
In Section 2, the way we approach Problem \ref{normalform} will be described.
Concretely, we develop an algorithm for any Hermite normal form $A$
to compute its $\delta$-vector. (See Theorem \ref{algo}.)
This actually gives a new way to compute the $\delta$-vector
for any integral simplex via its Hermite normal form.
This algorithm can be very efficient for simplices
with small volumes and prime volumes.

Based on this algorithm, as a by-product, we can derive some
conditions for Hermite normal forms to have ``shifted symmetric''
$\delta$-vector, namely, $\delta_i=\delta_{d+1-i}$ for $1\le i\le d$.
We will discuss these conditions for two classes of Hermite normal forms in Section 3.

In Section 4, we apply Theorem \ref{algo} and obtain
a solution to Problem \ref{normalform} when $\sum_{i=0}^d \delta_i \leq 4$.
Section 4.1 is devoted to studying the case $\sum_{i=0}^d \delta_i = 2$,
Section 4.2 is $\sum_{i=0}^d \delta_i = 3$ and
Section 4.3 is $\sum_{i=0}^d \delta_i = 4$.

Finally, in section 5, as our main result,
we show that the inequalities \eqref{Stanley} and \eqref{Hibi}
with an additional condition will give all possible $\delta$-vectors
with $\sum_{i=0}^d \delta_i = 4$. And in this case,
all $\delta$-vectors can be obtained by simplices (Theorem \ref{case4}).

\subsection*{Acknowledgements.} We thank Richard Stanley for useful
discussions and Steven Sam for carefully reading the previous draft
of this article. We also thank the reviewers for their very helpful
suggestion.

\section{An algorithm for the computation of
the $\delta$-vector of a simplex}

In this section, we introduce an algorithm for calculating the $\delta$-vector
of integral simplices arising from Hermite normal forms.

Let $M \in \ZZ^{d \times d}$. We write $\Pc(M)$ for the integral
simplex whose vertices are the row vectors of $M$ together with the
origin in $\RR^d$. We will present an algorithm to compute the
$\delta$-vector of $\Pc(M)$. To make the notation clear, we assume
$d=3$. The general case is completely analogous. Let $A$ be the
Hermite normal form of $M$. We have that $\{\Pc(M)\cap \ZZ^d\}$ is
in bijection with $\{\Pc(A)\cap \ZZ^d\}$. By definition,
\begin{align*}
A=
\begin{pmatrix}
a_{11} &0      &0     \\
a_{21} &a_{22} &0     \\
a_{31} &a_{32} &a_{33}
\end{pmatrix},
\end{align*}
where each $a_{ij}$ is a nonnegative integer.

For a vector $\lambda=(\lambda_1,\lambda_2,\lambda_3)$, consider
$$b(\lambda):=(\lambda_1,\lambda_2,\lambda_3)A=
(a_{11}\lambda_1+a_{21}\lambda_2+a_{31}\lambda_3,
a_{22}\lambda_2+a_{32}\lambda_3,a_{33}\lambda_3).$$
Then it is clear that the set of interior points inside $\Pc(A)$ ($ (\Pc(A) - \partial \Pc(A))\cap \ZZ^3 $)
is in bijection with the set
$$\{(\lambda_1,\lambda_2,\lambda_3)\mid \lambda_i>0,\,\lambda_1+\lambda_2+\lambda_3<1 ,\,b(\lambda)\in \ZZ^3\}.$$
An observation is that $ n(\Pc(A) - \partial \Pc(A))\cap \ZZ^3 $, for any $n\in \NN$, is in bijection with
$$\{(\lambda_1,\lambda_2,\lambda_3)\mid \lambda_i>0,\,\lambda_1+\lambda_2+\lambda_3<n ,\,b(\lambda)\in \ZZ^3\}.$$

We first consider all positive vectors $\lambda$ satisfying
$b(\lambda)\in \ZZ^3$. By the lower triangularity of the Hermite
normal form, we can start from the last coefficient of $b(\lambda)$
and move forward. It is not hard to see that each vector $\lambda$
should have the following form: ($\{r\}$ is the fractional part of a
rational number $r$.)
$$
\lambda_3=\lambda_3^{k,k_3}:=\frac{k}{a_{33}}+k_3, $$
$$\lambda_2=\lambda_2^{jk,k_2}:=\frac{j-\{a_{32}\lambda_3^k\}}{a_{22}}+k_2, $$ and
$$\lambda_1=\lambda_1^{ijk,k_1}:=\frac{i-\{a_{21}\lambda_2^{jk}+a_{31}\lambda_3^k\}}{a_{11}}+k_1,$$
for some nonnegative integers $k_3,k_2,k_1$, where
$k\in\{1,2,\ldots,a_{33}\}$, $j\in\{1,2,\ldots,a_{22}\}$,
$i\in\{1,2,\ldots,a_{11}\}$ and
$\lambda_1^{ijk}=\lambda_1^{ijk,0},\lambda_2^{jk}=\lambda_2^{jk,0},\lambda_3^k=\lambda_3^{k,0}$.
We call all the vectors $\lambda$ with the same index $(i,j,k)$
the \textit{congruence class of $(i,j,k)$}.

Now we go to the condition $\lambda_1+\lambda_2+\lambda_3<n$ in the
above bijection. As $n$ increases, we ask when is the first time
that a congruence class $(i,j,k)$ starts to produce interior points
inside $n\Pc(A)$. In other words, fix $(i,j,k)$. We want the
smallest $n$ such that $\lambda_1+\lambda_2+\lambda_3<n$ with
$\lambda_1,\,\lambda_2,\,\lambda_3>0$. Then it is clear that this
happens when $k_1=k_2=k_3=0$  and
$$n=s_{ijk}:=\lfloor\lambda_1^{ijk}+\lambda_2^{jk}+\lambda_3^k\rfloor+1, $$
where $\lfloor r\rfloor $ for a rational number is the biggest integer not larger than $r$.

Finally, when $n$ grows larger than $s_{ijk}$, we want to consider
how many interior points this fixed congruence class produces. Let
$n=s_{ijk}+\ell$, so each interior point corresponds to a choice of
$k_1\ge 0,k_2\ge 0,k_3\ge 0$ in the formula of
$\lambda_1^{ijk,k_1}$, $\lambda_2^{ij,k_2}$ and $\lambda_3^{i,k_3}$
such that $k_1+k_2+k_3\le \ell$. There are
$\bigg(\binom{d+1}{\ell}\bigg)=\binom{d+\ell}{\ell}$ choices in
total.

To sum up, we have the following two observations for each
congruence class $(i,j,k)$, $k\in\{1,2,\ldots,a_{33}\}$,
$j\in\{1,2,\ldots,a_{22}\}$, $i\in\{1,2,\ldots,a_{11}\}$:
\begin{enumerate}
\item $s_{ijk}$ is the smallest $n$ such that this congruence  class contributes interior points in the $n$-th dilation of $\Pc(A)$;
\item In the $(s_{ijk}+\ell)$-th dilation of $\Pc(A)$, this congruence class contributes $\bigg(\binom{d+1}{\ell}\bigg)$ interior points.
\end{enumerate}

Therefore, the following Theorem holds. We state it for a general
dimension $d$, and the proof is analogous to the case $d=3$.
\begin{Theorem}\label{algo}Let $\Pc(A)$ be a $d$-dimensional simplex
corresponding to a $d\times d$ matrix $A=(a_{ij}) \in \ZZ^{d \times d}$.
Then the generating function for the interior points of $n\Pc(A)$,
$i^*(\Pc(A),n) = |n(\Pc(A) -
\partial \Pc(A)) \cap \ZZ^d|$ is
$$\sum_{n=1}^{\infty}i^*(\Pc(A),n)t^n=
(1-t)^{-(d+1)}\sum_{\substack{(i_1,\dots,i_d)\\1\le i_j \le a_{ij}}}t^{s_{i_1\cdots i_d}},$$
where $$s_{i_1\cdots
i_d}=\bigg\lfloor\sum_{k=1}^{d}\lambda_{k}^{i_k,i_{k+1},\dots
i_d}\bigg\rfloor+1,$$ with $$\lambda_d^{i_d}=\frac{i_d}{a_{dd}},$$
and for each $1\le k<d$, $$\lambda_{k}^{i_k,i_{k+1},\dots
i_d}=a_{kk}^{-1}\bigg(i_k-\bigg\{\sum_{h=k+1}^{d}a_{hk}\lambda_h^{i_hi_{h+1}\dots
i_d}\bigg\}\bigg).$$

By the reciprocity law (0.3), we have
$$
\delta_{\Pc(A)}(t)=\sum_{\substack{(i_1,\dots,i_d)\\1\le i_j \le
a_{ij}}}t^{d+1-s_{i_1\dots i_d}}.
$$
\end{Theorem}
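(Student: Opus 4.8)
The plan is to establish the interior-point generating function by refining, for general $d$, the congruence-class bookkeeping already carried out for $d=3$, and then to read off the $\delta$-polynomial from the reciprocity identity. First I would record the bijection between $n(\Pc(A)-\partial\Pc(A))\cap\ZZ^d$ and the set of $\lambda=(\lambda_1,\dots,\lambda_d)$ satisfying $\lambda_j>0$, $\sum_j\lambda_j<n$, and $\lambda A\in\ZZ^d$; here $\lambda A$ is the coordinate vector of $\sum_j\lambda_j{\bf v}_j$, so integrality of $\lambda A$ is exactly the lattice-point condition. Since $A$ is lower triangular with positive diagonal, the $j$-th entry of $\lambda A$ is $a_{jj}\lambda_j+\sum_{h>j}a_{hj}\lambda_h$, which lets me solve the integrality constraints by back-substitution from $j=d$ down to $j=1$. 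This produces exactly the parametrization stated in the theorem: $\lambda_d=i_d/a_{dd}+k_d$ and $\lambda_k=a_{kk}^{-1}(i_k-\{\sum_{h>k}a_{hk}\lambda_h\})+k_k$, with $1\le i_j\le a_{jj}$ and $k_j\ge 0$ integers. The one point needing care here is that the fractional part $\{\sum_{h>k}a_{hk}\lambda_h\}$ depends only on the base values $\lambda_h^{i_h\cdots i_d}$ (those with $k_h=0$), because the $a_{hk}$ are integers and shifting any $\lambda_h$ by an integer $k_h$ leaves that fractional part unchanged; this is what makes the congruence class of $(i_1,\dots,i_d)$ well defined.

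Next, for a fixed class I would determine when it first contributes an interior point and how many it contributes thereafter. Writing $B=\sum_k\lambda_k^{i_k\cdots i_d}$ for the base sum, every point in the class has $\sum_j\lambda_j=B+\sum_j k_j$, so the constraint $\sum_j\lambda_j<n$ reads $\sum_j k_j<n-B$. Because each base value $\lambda_k^{i_k\cdots i_d}\in(0,1]$, one checks $B\in(0,d]$, hence $s_{i_1\cdots i_d}=\lfloor B\rfloor+1$ is the least $n$ with $n>B$, and the class contributes its first (and unique) interior point at $n=s_{i_1\cdots i_d}$. Setting $n=s_{i_1\cdots i_d}+\ell$, the condition becomes $\sum_j k_j<\ell+(s_{i_1\cdots i_d}-B)$ with $s_{i_1\cdots i_d}-B\in(0,1]$, so for nonnegative integers this is equivalent to $\sum_j k_j\le\ell$; the number of such $(k_1,\dots,k_d)$ is the multiset coefficient $\binom{d+\ell}{\ell}$. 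This yields the two itemized observations and shows that the contribution of the class to $\sum_n i^*(\Pc(A),n)t^n$ equals $\sum_{\ell\ge 0}\binom{d+\ell}{\ell}t^{s_{i_1\cdots i_d}+\ell}$.

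Summing over all classes and using the negative-binomial identity $\sum_{\ell\ge 0}\binom{d+\ell}{\ell}t^{\ell}=(1-t)^{-(d+1)}$ then gives
\[
\sum_{n=1}^{\infty}i^*(\Pc(A),n)t^n=(1-t)^{-(d+1)}\sum_{\substack{(i_1,\dots,i_d)\\1\le i_j\le a_{jj}}}t^{s_{i_1\cdots i_d}},
\]
which is the first claim. For the $\delta$-polynomial, I would multiply through by $(1-t)^{d+1}$ and compare with the identity $(1-t)^{d+1}\sum_{n\ge 1}i^*(\Pc(A),n)t^n=\sum_{i=0}^d\delta_{d-i}t^{i+1}$ coming from the reciprocity law (0.3); this identifies $\sum_{(i_1,\dots,i_d)}t^{s_{i_1\cdots i_d}}$ with $\sum_{i=0}^d\delta_{d-i}t^{i+1}$. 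Since each $s_{i_1\cdots i_d}\in\{1,\dots,d+1\}$, substituting $t\mapsto 1/t$ and multiplying by $t^{d+1}$ (equivalently, reading the coefficients in reverse order) converts the exponents $s_{i_1\cdots i_d}$ into $d+1-s_{i_1\cdots i_d}$ and the coefficients $\delta_{d-i}$ back into $\delta_m$, giving $\delta_{\Pc(A)}(t)=\sum_{(i_1,\dots,i_d)}t^{d+1-s_{i_1\cdots i_d}}$.

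The main obstacle is the general-$d$ back-substitution in the first paragraph: verifying that the recursive solution of the triangular integrality system is both exhaustive and non-redundant, i.e., that every $\lambda$ with $\lambda A\in\ZZ^d$ and $\lambda_j>0$ arises from a unique tuple $(i_1,\dots,i_d,k_1,\dots,k_d)$, and that the fractional-part terms are genuinely independent of the shifts $k_h$. Once this parametrization is secured, the remaining steps---bounding $B$ to pin down $s_{i_1\cdots i_d}$, the stars-and-bars count, the series summation, and the reciprocity bookkeeping---are routine and mirror the $d=3$ computation verbatim.
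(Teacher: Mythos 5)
Your proposal is correct and follows essentially the same route as the paper: the same bijection with positive $\lambda$ satisfying $\lambda A\in\ZZ^d$, the same back-substitution into congruence classes indexed by $(i_1,\dots,i_d)$ with nonnegative integer shifts, the same stars-and-bars count $\binom{d+\ell}{\ell}$, the negative-binomial summation, and the reciprocity step. The paper only writes out the case $d=3$ and declares the general case analogous, so your explicit attention to the shift-invariance of the fractional parts and the bound $B\in(0,d]$ is a welcome (but not divergent) elaboration.
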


\begin{Example}{\em
Let $A$ be the $4 \times 4$ matrix
\begin{align*}
\begin{pmatrix}
1 &0 &0 &0 \\
0 &1 &0 &0 \\
1 &1 &2 &0 \\
1 &0 &1 &3
\end{pmatrix}.
\end{align*}
Consider
\begin{align*}b(\lambda)=
(\lambda_1,\lambda_2,\lambda_3,\lambda_4)A
=(\lambda_1+\lambda_3+\lambda_4,\lambda_2+\lambda_3,2\lambda_3+\lambda_4,3\lambda_4).
\end{align*}
Denote
$$
\lambda_4^{j}=\frac{j}{3}, \text{ for }j=1,2,3,
\,\,\,\,\lambda_3^{ij}=\frac{i-\{\lambda_4^j\}}{2}, \text{ for
}i=1,2,$$
$$\lambda_2^{ij}=1-\{\lambda_3^{ij}\},\,\,\,\,\lambda_1^{ij}=1-\{\lambda_3^{ij}+\lambda_4^{j}\}$$
and
$$s_{ij}=1+\lfloor \lambda_1^{ij}+\lambda_2^{ij}+\lambda_3^{ij}+\lambda_4^{j}\rfloor.$$
Then we have
$$s_{11}=2,s_{21}=3,s_{12}=2,s_{22}=3,s_{13}=3,s_{23}=5,$$
$$
\delta_{\Pc(A)}(t)=
\sum_{i=1}^{3}\sum_{j=1}^{2}t^{d+1-s_{ij}}=1+3t^2+2t^3,
$$
and thus
$$
\delta(\Pc(A))=(1,0,3,2,0).
$$
}\end{Example}

\section{Shifted symmetric $\delta$-vectors}

In this section, we define shifted symmetric $\delta$-vectors and
study its conditions for some special Hermite normal forms. Results in this
section are direct applications of the algorithm developed in the
previous section (Theorem \ref{algo}). In \cite{shifted}, the second
author studied  shifted symmetric $\delta$-vectors without using the
algorithm.

We call a $\delta$-vector {\em shifted symmetric}, if
$\delta_i=\delta_{d+1-i}$, $1\le i\le d$. For example,
$(1,\,1,\,2,\,2,\,1,\,2,\,2,\,1)$ is shifted symmetric.

We want this definition because it simply arises from the algorithm
for the ``one row'' Hermite normal forms as discussed in the first
subsection. In the second subsection, we will consider a special
``one row'' Hermite normal form, which allows us to have better
results.

\subsection{``One row'' Hermite normal forms}

Consider all $d\times d$ matrices with determinant $D$
and the following Hermite normal forms for some $k\in\{1,2,\dots,d\}$.

\begin{align}\label{onegen}A_D=
\begin{pmatrix}
1 &       &  &  &  &       &  \\
  &\ddots &  &  &  &       &  \\
  &       &1 &  &  &       &  \\
a_1 &\cdots & a_{k-1} &D &  &       &  \\
  &       &  &  &1 &       &  \\
  &       &  &  &  &\ddots &  \\
  &       &  &  &  &       &1
\end{pmatrix},
\end{align}
where $a_1,\dots,a_{k-1}$ are nonnegative integers smaller than $D$ and all other terms are zero.
Let $d_j$  denote the number of $j$'s among these $a_{\ell}$'s, for $j=1,\ldots,D-1$.
Then we can simplify Theorem \ref{algo}
for these ``one row'' Hermite normal forms.

\begin{Corollary}\label{onegencor}
Let $M \in \ZZ^{d \times d}$ with $\det(M)=D$ and $\Pc(M)$ be the
corresponding integral simplex. If its Hermite normal form is of the
form as in \eqref{onegen}, then we have
$$\delta_{\Pc(M)}(t)=\sum_{i=1}^D t^{d+1-s_i},$$ where
\begin{align}\label{prime}
s_i=\bigg\lfloor\frac{i}{D}-\sum_{j=1}^{D-1}\bigg\{\frac{ij}{D}\bigg\}d_j\bigg\rfloor+d.
\end{align}
\end{Corollary}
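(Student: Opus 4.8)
The plan is to specialize the general formula of Theorem~\ref{algo} to the matrix $A_D$ in \eqref{onegen} and simplify the quantity $s_{i_1\cdots i_d}$. The crucial observation is that in $A_D$ all diagonal entries equal $1$ except for the single entry $a_{kk}=D$. Since the index $i_\ell$ ranges over $\{1,\dots,a_{\ell\ell}\}$, every index $i_\ell$ with $\ell\neq k$ is forced to take the unique value $i_\ell=1$, while $i_k$ ranges over $\{1,\dots,D\}$. Thus the $d$-fold sum collapses to a single sum over $i:=i_k\in\{1,\dots,D\}$, which immediately explains why $\delta_{\Pc(M)}(t)=\sum_{i=1}^D t^{d+1-s_i}$ with exactly $D$ terms.

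The main work is to evaluate $s_i:=s_{i_1\cdots i_d}$ for this single surviving index. First I would compute the $\lambda$-values level by level. Because $a_{\ell\ell}=1$ for $\ell\neq k$, the defining recursion $\lambda_\ell^{\,\cdots}=a_{\ell\ell}^{-1}\bigl(i_\ell-\{\sum_{h>\ell}a_{h\ell}\lambda_h\}\bigr)$ gives $\lambda_\ell=1-\{\cdots\}\in(0,1]$ for each $\ell\neq k$, while $\lambda_k=\frac{1}{D}\bigl(i-\{\sum_{h>k}a_{hk}\lambda_h\}\bigr)$. The point is that below row $k$ the only nonzero off-diagonal entries are the $a_h$ in row $k$ itself, so the "carry" terms $\{\cdots\}$ feeding into rows $\ell<k$ involve only the single fractional quantity $\lambda_k$ multiplied by $a_{\ell k}$; I would track precisely which fractional parts survive. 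Summing $\sum_\ell \lambda_\ell$ and taking the floor, the integer contributions from the $d-1$ rows with diagonal $1$ should combine to give the additive constant $d$ in \eqref{prime}, while the row-$k$ contribution produces the $\frac{i}{D}$ term together with the correction $-\sum_{j=1}^{D-1}\{\frac{ij}{D}\}\,d_j$, once one recognizes that $\lambda_k=i/D$ (the carry into row $k$ is $0$ since there are no nonzero entries strictly below the diagonal in column $k$ except nothing relevant) and that each $a_\ell=j$ contributes a fractional part $\{a_\ell\lambda_k\}=\{ij/D\}$, grouped by the value $j$ with multiplicity $d_j$.

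The hard part will be the careful bookkeeping of fractional parts, specifically verifying that $\lambda_k=i/D$ exactly (i.e.\ that no off-diagonal entries below row $k$ in column $k$ perturb it) and that the sum $\sum_{\ell\neq k}\lambda_\ell$ contributes an integer part of exactly $d-1$ after accounting for the fractional cancellations. I would argue that each $\lambda_\ell=1-\{a_\ell i/D\}$ for the $k-1$ rows above that carry a nonzero $a_\ell$, and $\lambda_\ell=1$ for the remaining rows with no coupling, so that $\sum_{\ell\neq k}\lambda_\ell=(d-1)-\sum_{\ell}\{a_\ell i/D\}=(d-1)-\sum_{j=1}^{D-1}\{ij/D\}\,d_j$. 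Adding $\lambda_k=i/D$ and applying $\lfloor\cdot\rfloor+1$ then yields $s_i=\bigl\lfloor \frac{i}{D}-\sum_{j=1}^{D-1}\{\frac{ij}{D}\}d_j\bigr\rfloor+d$, since the integer $d-1$ plus the $+1$ from Theorem~\ref{algo} pulls out of the floor to become $d$. The only subtlety to check is that all the relevant $\lambda_\ell$ genuinely lie in $(0,1]$ so that no unexpected floor contributions appear, which follows from the definition of fractional part.
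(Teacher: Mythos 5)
Your proposal is correct and follows essentially the same route as the paper: specialize Theorem~\ref{algo} to the matrix \eqref{onegen}, observe that the sum collapses to the single index $i=i_k\in\{1,\dots,D\}$, compute $\lambda_k^i=i/D$, $\lambda_\ell^i=1-\{a_\ell i/D\}$ for $\ell<k$ and $\lambda_\ell^i=1$ for $\ell>k$, and pull the integer $d-1$ out of the floor. The bookkeeping you flag as the ``hard part'' works out exactly as you describe, with the $a_\ell=0$ entries contributing $\{0\}=0$ so that grouping by the value $j$ with multiplicity $d_j$ gives \eqref{prime}.
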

\begin{proof}
Consider
$$b(\lambda)=(\lambda_1,\dots,\lambda_k,\dots,\lambda_d)A_D=(\lambda_1+a_1\lambda_k,\dots,\lambda_{k-1}+a_{k-1}\lambda_k,
D\lambda_k,\lambda_{k+1},\dots,\lambda_d).$$ Using notation from the
proof of Theorem \ref{algo}, we have, for $i=1,2,\dots,D$,
$$\lambda_k^i=\frac{i}{D},\,\,\lambda_{\ell}^i=1-\bigg\{a_{\ell}\frac{i}{D}\bigg\},\text{ for }\ell=1,\dots,k-1$$
and $$\lambda_{k+1}^i=\dots=\lambda_{d}^i=1.$$ Therefore,
$s_i=1+\lfloor\lambda_1^i+\dots+\lambda_d^i\rfloor=\bigg\lfloor\frac{i}{D}
-\sum_{j=1}^{D-1}\bigg\{\frac{ij}{D}\bigg\}d_j\bigg\rfloor+d.$
\end{proof}

Now we are going to deduce a symmetry property of the $\delta$-vectors
by using this Corollary.
\begin{Proposition}[Shifted symmetry for ``one row'']\label{shifted}
For a matrix $M \in \ZZ^{d \times d}$ with Hermite normal form \eqref{onegen},
we have $s_i+s_{D-i}=d+1$, for $i=1,\dots,D-1$,
which implies $\delta_i=\delta_{d+1-i}$  by reciprocity,
if and only if the following three conditions hold:
\begin{enumerate}
\item $\sum_{j=1}^{D-1}jd_j - 1$ is coprime with $D$;
\item $d_j=0$ for all $j$ which is not coprime with $D$;
\item $\sum_{j=1}^{D-1}d_j=d-1$.
\end{enumerate}
\end{Proposition}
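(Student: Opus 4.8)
The plan is to compute $s_i+s_{D-i}$ directly from the closed formula \eqref{prime} and read off exactly when it equals $d+1$. Write $T:=\sum_{j=1}^{D-1}d_j$ and $W:=\sum_{j=1}^{D-1}jd_j$, and for $1\le i\le D-1$ set $A_i:=\sum_{j=1}^{D-1}\{ij/D\}d_j$, so that $s_i=\lfloor i/D-A_i\rfloor+d$. The first step is a pairing of fractional parts: since $\{(D-i)j/D\}=\{-ij/D\}$, one has $\{ij/D\}+\{(D-i)j/D\}=1$ when $D\nmid ij$ and $=0$ when $D\mid ij$. Introducing $U_i:=\sum_{j:\,D\mid ij}d_j$, this yields $A_i+A_{D-i}=T-U_i$, and hence
\[
\Big(\tfrac{i}{D}-A_i\Big)+\Big(\tfrac{D-i}{D}-A_{D-i}\Big)=1-T+U_i\in\ZZ.
\]

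The second step is an elementary floor lemma: for reals $x,y$ with $x+y\in\ZZ$, the sum $\lfloor x\rfloor+\lfloor y\rfloor$ equals $x+y$ if $x\in\ZZ$ and $x+y-1$ otherwise (the two summands are integers, or non-integers, simultaneously). I would apply this with $x=i/D-A_i$; writing $A_i=a_i/D$ with $a_i\equiv iW\pmod D$, one finds $x\in\ZZ\iff D\mid i(W-1)$. Thus $s_i+s_{D-i}=d+1$ holds if and only if
\[
U_i=T-d\ \text{ when } D\mid i(W-1),\qquad U_i=T-d+1\ \text{ when } D\nmid i(W-1).
\]
Both directions of the Proposition then reduce to analysing this family of equalities over $i=1,\dots,D-1$.

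For the direction $(1)\wedge(2)\wedge(3)\Rightarrow$ symmetry: condition (1) forces $D\mid i(W-1)$ to fail for every $i\in\{1,\dots,D-1\}$, so only the second alternative occurs; condition (3) gives $T-d+1=0$; and condition (2) gives $U_i=0$ for all $i$, since $d_j$ can contribute to some $U_i$ only if $D\mid ij$ is solvable with $1\le i\le D-1$, i.e. only if $\gcd(D,j)>1$. Hence every required equality holds.

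The converse is the main obstacle, and the key leverage is the structural bound $T=\sum_{j=1}^{D-1}d_j\le k-1\le d-1$ coming from the shape of \eqref{onegen} (there are only $k-1$ off-diagonal entries). First I would specialize to $i=1$, where $U_1=0$ (no $j\in\{1,\dots,D-1\}$ is divisible by $D$); the two alternatives read $T=d$ or $T=d-1$, and the bound $T\le d-1$ eliminates $T=d$, forcing $T=d-1$, which is condition (3). Given $T=d-1$, the first alternative would demand $U_i=-1<0$, impossible; so no $i$ can satisfy $D\mid i(W-1)$, which is precisely $\gcd(W-1,D)=1$, i.e. condition (1). Finally every $i$ now falls in the second alternative with $U_i=0$; running the contribution count in reverse, $U_i=0$ for all $i$ forces $d_j=0$ whenever $\gcd(D,j)>1$, which is condition (2). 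The only delicate points are the nonnegativity argument (that $U_i\ge 0$ kills the first alternative) and the exact count of when $D\mid ij$ admits a solution in range, which is what pins down condition (2).
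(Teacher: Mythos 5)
Your proof is correct and follows essentially the same route as the paper's: the same pairing $\{ij/D\}+\{(D-i)j/D\}\in\{0,1\}$, the same congruence $i-\sum_j \overline{ij}\,d_j\equiv i(1-\sum_j jd_j)\pmod D$ governing when the floor arguments are integral, and the same use of the bound $\sum_j d_j\le d-1$. The only difference is organizational (you extract (3), then (1), then (2) starting from $i=1$, while the paper argues the necessity of each condition separately by contradiction), and your write-up is if anything the cleaner of the two.
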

\begin{proof}
Let us consider $s_i+s_{D-i}$.
For an integer $a$, let $\overline{a}$ denote the residue class in $\ZZ / D \ZZ$.
Then we have
\begin{align*}
s_i+s_{D-i}&=
\bigg\lfloor\frac{i}{D}-\sum_{j=1}^{D-1}\bigg\{\frac{ij}{D}\bigg\}d_j\bigg\rfloor +
\bigg\lfloor\frac{D-i}{D}-\sum_{j=1}^{D-1}\bigg\{\frac{(D-i)j}{D}\bigg\}d_j\bigg\rfloor + 2d \\
&=\bigg\lfloor \frac{i-\sum_{j=1}^{D-1} \overline{ij}d_j}{D} \bigg\rfloor +
\bigg\lfloor \frac{D-i-\sum_{j=1}^{D-1} \overline{(D-i)j}d_j}{D} \bigg\rfloor +2d.
\end{align*}
Since
\begin{align}\label{rational}\begin{cases}
i-\sum_{j=1}^{D-1} \overline{ij}d_j \equiv i \bigg(1 - \sum_{j=1}^{D-1}jd_j \bigg) \;\; (\mod D), \\
D-i-\sum_{j=1}^{D-1} \overline{(D-i)j}d_j \equiv (D-i) \bigg(1 - \sum_{j=1}^{D-1}jd_j \bigg) \;\; (\mod D),
\end{cases}\end{align}
if the condition (1) is not satisfied, then one has
\begin{align*}
s_i+s_{D-i} &= \frac{D-\sum_{j=1}^{D-1} \bigg(\overline{ij} + \overline{(D-i)j} \bigg) d_j}{D}+2d \\
&=2d+1-\sum_{j=1}^{D-1} \frac{\overline{ij} + \overline{(D-i)j}}{D}d_j \\
&\geq 2d+1 - \sum_{j=1}^{D-1}d_j \geq d+2 > d+1
\end{align*}
for some $i$ with $1 \leq i \leq D-1$.
Thus, the condition (1) is a necessary condition to be $s_i+s_{D-i}=d+1$ for all $i$.
On the contrary, when the condition (1) is satisfied, again from \eqref{rational}, we have
\begin{align*}
s_i+s_{D-i} &= \frac{D-\sum_{j=1}^{D-1} \bigg(\overline{ij} + \overline{(D-i)j} \bigg) d_j}{D}+2d-1 \\
&= 2d - \sum_{j=1}^{D-1} \frac{\overline{ij} + \overline{(D-i)j}}{D} d_j \\
&= 2d - \sum_{D \text{ does not divide } ij}d_j.
\end{align*}
If the condition (2) is not satisfied, then we have
$$s_i+s_{D-i}= 2d - \sum_{D \text{ does not divide } ij}d_j > d+1$$
for some $i$ with $1 \leq i \leq D-1$.
Hence, the condition (2) is also a necessary condition.
In addition, if the condition (3) is not satisfied, then we have $s_i+s_{D-i} > d+1$.
Thus, the condition (3) is also a necessary condition.
On the other hand, when the conditions (1), (2) and (3) are all satisfied,
we have $s_i+s_{D-i}=D+1$ for all $i$.

Therefore, we obtain a necessary and sufficient to be
$s_i+s_{D-i}=d+1$ for all $i$.
\quad\quad\quad\quad\quad\quad\quad\quad\quad
\quad\quad\quad\quad\quad\quad\quad\quad\quad
\quad\quad\quad\quad\quad\quad\quad\quad\quad
\end{proof}

The conditions of Proposition \ref{shifted} are not very easy to check,
so we consider a special case of Hermite normal forms \eqref{onegen}.

\subsection{``All $D-1$ one row''  Hermite normal forms}

Assume in addition $d_{D-1}=d-1$ in Corollary \ref{onegencor}, i.e.,
the Hermite normal form looks like
\begin{align}\label{all}
\begin{pmatrix}
1   &    &       &    &  \\
    &1   &       &    &  \\
    &    &\ddots &    &  \\
    &    &       &1   &  \\
D-1 &D-1 &\cdots &D-1 & D
\end{pmatrix}.
\end{align}
 Then we have
\begin{Corollary}[All $D-1$] For a matrix $M \in \ZZ^{d \times d}$ with Hermite normal
form \eqref{all}, we have
$$\delta_{\Pc(M)}(t)=\sum_{i=1}^{D} t^{d+1-s_i}, \text{ where } s_i=\bigg\lfloor\frac{id}{D}\bigg\rfloor+1.$$
\end{Corollary}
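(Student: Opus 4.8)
The plan is to specialize Corollary~\ref{onegencor} to the case $d_{D-1}=d-1$ and $d_j=0$ for all $j<D-1$. The task reduces to evaluating the formula~\eqref{prime} for $s_i$ under this substitution, so the work is a direct computation rather than a conceptual argument. First I would substitute into
$$s_i=\bigg\lfloor\frac{i}{D}-\sum_{j=1}^{D-1}\bigg\{\frac{ij}{D}\bigg\}d_j\bigg\rfloor+d,$$
which collapses the sum to the single term $j=D-1$ with coefficient $d-1$, giving
$$s_i=\bigg\lfloor\frac{i}{D}-(d-1)\bigg\{\frac{i(D-1)}{D}\bigg\}\bigg\rfloor+d.$$

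The key step is to simplify the fractional part $\{i(D-1)/D\}$. Since $i(D-1)=iD-i\equiv -i \pmod{D}$, for $1\le i\le D-1$ we have $\{i(D-1)/D\}=\{-i/D\}=(D-i)/D$. Substituting this in yields
$$s_i=\bigg\lfloor\frac{i}{D}-(d-1)\frac{D-i}{D}\bigg\rfloor+d=\bigg\lfloor\frac{i-(d-1)(D-i)}{D}\bigg\rfloor+d.$$
I would then rewrite the numerator as $i-(d-1)(D-i)=id-(d-1)D$, so that
$$s_i=\bigg\lfloor\frac{id-(d-1)D}{D}\bigg\rfloor+d=\bigg\lfloor\frac{id}{D}\bigg\rfloor-(d-1)+d=\bigg\lfloor\frac{id}{D}\bigg\rfloor+1,$$
using that $(d-1)D/D=d-1$ is an integer and may be pulled out of the floor. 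This gives exactly the claimed closed form for $s_i$, and hence $\delta_{\Pc(M)}(t)=\sum_{i=1}^D t^{d+1-s_i}$ follows immediately from Corollary~\ref{onegencor}.

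The main obstacle, though minor, is getting the fractional-part reduction exactly right at the boundary, in particular confirming that $\{i(D-1)/D\}=(D-i)/D$ holds precisely for the range $1\le i\le D-1$ (the index range appearing in Corollary~\ref{onegencor} runs $i=1,\dots,D$, but for $i=D$ the term $d_{D-1}$ contributes a fractional part $\{D(D-1)/D\}=0$, so $s_D=\lfloor D/D\rfloor+d=1+d$, consistent with $\lfloor dD/D\rfloor+1=d+1$). Once that case-check is in place, the remaining manipulation is the elementary extraction of the integer $d-1$ from inside the floor function, which is valid since it is an integer. I would present the computation compactly, flagging the congruence $i(D-1)\equiv -i$ as the crux.
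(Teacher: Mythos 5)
Your proof is correct and follows exactly the route the paper intends: the paper states this corollary without proof as an immediate specialization of Corollary~\ref{onegencor} with $d_{D-1}=d-1$ and $d_j=0$ otherwise, which is precisely the substitution you carry out. The reduction $\{i(D-1)/D\}=(D-i)/D$ for $1\le i\le D-1$, the identity $i-(d-1)(D-i)=id-(d-1)D$, and the boundary check at $i=D$ are all correct.
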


For the Hermite normal form \eqref{all},
the conditions for shifted symmetry in Proposition \ref{shifted} can be simplified.
\begin{Proposition}[Shifted symmetry for ``all $D-1$ one row'']
Let $M \in \ZZ^{d \times d}$ with Hermite normal form \eqref{all}. Then
\begin{enumerate}
\item $\delta_i=\delta_{d+1-i}$ if and only if $D$ and $d$ are coprime.
\item When $D=kd$, for $k\in \NN$ and $k\ge2$,
the $\delta$-vector is $$(1,\underbrace{k,\dots,k}_{d-1},k-1),$$ which is not shifted symmetric.
But for $k=2$, we have $\delta_k=\delta_{d-k}$ (Gorenstein).
\end{enumerate}
\end{Proposition}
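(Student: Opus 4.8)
The plan is to derive everything from the explicit floor formula $s_i=\lfloor id/D\rfloor+1$ supplied by the Corollary (All $D-1$), read through the reciprocity form $\delta_{\Pc(M)}(t)=\sum_{i=1}^{D}t^{d+1-s_i}$. Under this reading, shifted symmetry $\delta_i=\delta_{d+1-i}$ ($1\le i\le d$) is equivalent to the pairing identity $s_i+s_{D-i}=d+1$ for all $1\le i\le D-1$ (the class $i=D$ gives $s_D=d+1$ and hence the term $\delta_0=1$, which lies outside the symmetric range).

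For part (1) I would compute $s_i+s_{D-i}$ head-on. Writing $(D-i)d/D=d-id/D$ and invoking the elementary identity $\lfloor x\rfloor+\lfloor -x\rfloor$ ($=0$ if $x\in\ZZ$, else $-1$), one obtains
\[
s_i+s_{D-i}=d+2+\bigl(\lfloor id/D\rfloor+\lfloor -id/D\rfloor\bigr)=\begin{cases}d+2,& D\mid id,\\ d+1,& D\nmid id.\end{cases}
\]
Thus $s_i+s_{D-i}=d+1$ holds for every $1\le i\le D-1$ exactly when $D\nmid id$ throughout that range, and $D\mid id$ admits a solution with $1\le i\le D-1$ if and only if $\gcd(d,D)>1$ (take $i=D/\gcd(d,D)$; conversely coprimality forces $D\mid i$). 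This yields the ``$D$ and $d$ coprime'' criterion. Equivalently, this is the specialization of Proposition \ref{shifted}: with $d_{D-1}=d-1$ and all other $d_j=0$, its conditions (2) and (3) hold automatically (since $\gcd(D-1,D)=1$ and $\sum_j d_j=d-1$), while condition (1) becomes $\gcd\bigl((D-1)(d-1)-1,\,D\bigr)=\gcd(d,D)=1$.

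For part (2) I would substitute $D=kd$, so that $s_i=\lfloor i/k\rfloor+1$ and each class $i\in\{1,\dots,kd\}$ contributes to degree $d+1-s_i=d-\lfloor i/k\rfloor$. The computation reduces to counting the fibers of $i\mapsto\lfloor i/k\rfloor$: for each $q$ with $1\le q\le d-1$ there are exactly $k$ indices $i\in\{qk,\dots,qk+k-1\}$, all in range, each contributing to $\delta_{d-q}$; the boundary values $q=0$ and $q=d$ are truncated, giving only the $k-1$ indices $\{1,\dots,k-1\}$ (degree $d$) and the single index $i=kd$ (degree $0$). Hence $\delta(\Pc(M))=(1,\underbrace{k,\dots,k}_{d-1},k-1)$. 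Non-symmetry is then immediate from $\delta_1=k\neq k-1=\delta_d$, while for $k=2$ the vector is $(1,2,\dots,2,1)$, which is palindromic, i.e.\ $\delta_i=\delta_{d-i}$ (Gorenstein).

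The floor arithmetic in part (1) and the fiber count in part (2) are routine; the one point that genuinely requires care --- and the precise source of the asymmetry in part (2) --- is the treatment of the two boundary classes $q=0$ and $q=d$, whose truncation is exactly what pulls $\delta_d$ down to $k-1$ while keeping $\delta_0=1$. I would also check the degenerate case $d=1$ separately (there the block of $k$'s is empty and the vector reads $(1,k-1)$) to confirm the stated formula remains valid.
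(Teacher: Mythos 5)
Your proposal is correct, and it supplies precisely the computation the paper omits: the paper states this Proposition with no proof, treating it as an immediate consequence of the preceding Corollary (All $D-1$) and of Proposition \ref{shifted}. Your direct route for part (1) --- pairing $i$ with $D-i$ and using $\lfloor x\rfloor+\lfloor -x\rfloor\in\{0,-1\}$ to get $s_i+s_{D-i}=d+2$ or $d+1$ according as $D\mid id$ or not --- is clean and checks out, and your cross-check that the specialization $d_{D-1}=d-1$ of Proposition \ref{shifted} reduces to $\gcd\bigl((D-1)(d-1)-1,D\bigr)=\gcd(d,D)$ is also right. The fiber count in part (2) is correct, including the boundary truncation that produces the $k-1$.

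The one step you assert rather than prove is the \emph{equivalence} of shifted symmetry with the pairing identity $s_i+s_{D-i}=d+1$ for all $1\le i\le D-1$. The direction ``pairing $\Rightarrow$ shifted symmetric'' is immediate (the exponents $d+1-s_i$ and $d+1-s_{D-i}$ then sum to $d+1$), and that is all the paper's Proposition \ref{shifted} claims; but the ``only if'' in part (1) needs the converse, namely that failure of the pairing destroys shifted symmetry. This is true here but deserves a line: when $g=\gcd(d,D)>1$, the indices $i=mD/g$ ($1\le m\le g-1$) are exactly those with $s_i+s_{D-i}=d+2$, and they contribute the exponent multiset $\{md/g: 1\le m\le g-1\}$, which is symmetric about $d/2$; the remaining indices contribute a multiset symmetric about $(d+1)/2$. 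Writing $f=f_{\mathrm{good}}+f_{\mathrm{bad}}$ for the corresponding generating functions, shifted symmetry $t^{d+1}f(1/t)=f(t)$ forces $(t-1)f_{\mathrm{bad}}(t)=0$, impossible since $f_{\mathrm{bad}}$ is a nonzero polynomial with nonnegative coefficients. With that line added (and the tacit assumption $d\ge 2$, without which part (2)'s ``not shifted symmetric'' claim degenerates), your argument is complete.
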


\section{Classification of Hermite normal forms with a given $\delta$-vector}
In this section, we will give another application of the algorithm
Theorem \ref{algo}. Consider Problem \ref{normalform} first with the
assumption that matrix $A \in \ZZ^{d \times d}$ has prime
determinant, i.e., $A$ is of the form \eqref{onegen}, with only one
general row. By Corollary \ref{onegencor}, in order to classify all
possible  Hermite normal forms \eqref{onegen} with a given
$\delta$-vector $(\delta_0, \delta_1, \ldots, \delta_d)$, we need to
find all nonnegative integer solutions $(d_1,d_2,\dots,d_{D-1})$
with $d_1+d_2+\dots+d_{D-1}\le d-1$ such that
$$\#\{i:d+1-s_i=j, \text{ for }i=1,\dots, D\}=\delta_j, \text{ for }j=0,\dots,d.$$
By Corollary \ref{onegencor}, we can build equations with ``floor''
expressions for $(d_1,d_2,\dots,d_{D-1})$. Remove the ``floor''
expressions, we obtain $D$ linear equations of
$(d_1,d_2,\dots,d_{D-1})$ with different constant terms but the same
$D\times D$ coefficient matrix $M$ with $ij$ entry $\{(ij)\mod
\,D\}$, which is a number in $\{0,1,\dots,D-1\}$. Then we first get
all integer solutions $(d_1,d_2,\dots,d_{D-1})$, and then test every
candidates by the restrictions of nonnegativity and
$d_1+d_2+\dots+d_{D-1}\le d-1$.

For $D=2$ and $3$, the coefficient matrix $M$ is nonsingular, so we
can write down the complete solutions, as presented in the first two
subsections. For larger primes, the coefficient matrix becomes
singular, so there are free variables in the integer solutions
$(d_1,d_2,\dots,d_{D-1})$, which make it very hard to simplify the
final solutions after the test.

The idea is similar for Hermite normal forms with non prime
determinant. Instead of using Corollary \ref{onegencor}, we need to
use the formulas in Theorem \ref{algo}. In the third subsection, we
will present the complete solution for $D=4$.

\subsection{A solution of Problem \ref{normalform}
when $\sum_{i=0}^d\delta_i = 2$}

The goal of this subsection is to give a solution of Problem
\ref{normalform} when $\sum_{i=0}^d\delta_i = 2$, i.e., given a
$\delta$-vector $\delta(\Pc)$ with $\sum_{i=0}^d\delta_i = 2$, we
classify all the integral simplices with $\delta(\Pc)$ arising from
Hermite normal forms with determinant 2.

We consider all Hermite normal forms \eqref{onegen} with $D=2$, namely,
\begin{align}\label{2}
A_2=\begin{pmatrix}
1 &       &  &  &  &       &  \\
  &\ddots &  &  &  &       &  \\
  &       &1 &  &  &       &  \\
* &\cdots &* &2 &  &       &  \\
  &       &  &  &1 &       &  \\
  &       &  &  &  &\ddots &  \\
  &       &  &  &  &       &1
\end{pmatrix},
\end{align}
where there are $d_1$ 1's among the $*$'s. Notice that the position
of the row with a 2 does not affect the $\delta$-vector, so the only
variable is $d_1$. By Corollary \ref{onegencor}, we have a formula
for the $\delta$-vector of this integral simplex $\Pc(A_2)$. Denote
$$
k=1-\bigg\lfloor\frac{1-d_1}{2}\bigg\rfloor.
$$
Then one has $\delta_0=\delta_k=1$.

By this formula, we can characterize all Hermite normal forms with a
given $\delta$-vector. Let $\delta_0=\delta_i=1$. Then by solving
the equation $i=1-\left\lfloor\frac{1-d_1}{2}\right\rfloor$, we
obtain $d_1=2i-2$ and $d_1=2i-1$, both cases will give us the
desired $\delta$-vector.

Notice that there is a constraint on $d_1$ to be $0 \leq d_1 \leq
d-1$. Not all $\delta$-vectors are obtained by from simplices. But
we can easily get the restriction of $i$ and the corresponding $d_1$
as follows (by $d_1 \geq 0$, we have $i \geq 1$):

\begin{enumerate}
\item If $i \leq d/2$, $d_1=2i-2$ and $d_1=2i-1$ both work,
and these give all the matrices with this $\delta$-vector.
\item If $i=(d+1)/2$, only $d_1=2i-2=d-1$ works.
\item If $i > (d+1)/2$, there is no solution.
\end{enumerate}

Now, this result has been obtained essentially in \cite{smallvolume}.
In fact, the inequality $i \leq (d+1)/2$ means that
the $\delta$-vector satisfies \eqref{Hibi}.

\subsection{A solution of Problem \ref{normalform}
when $\sum_{i=0}^d\delta_i = 3$}

We consider all Hermite normal forms \eqref{onegen} with $D=3$, namely,
\begin{align}\label{3}
A_3=\begin{pmatrix}
1 &       &  &  &  &       &  \\
  &\ddots &  &  &  &       &  \\
  &       &1 &  &  &       &  \\
* &\cdots &* &3 &  &       &  \\
  &       &  &  &1 &       &  \\
  &       &  &  &  &\ddots &  \\
  &       &  &  &  &       &1
\end{pmatrix},
\end{align}
where there are $d_1$ 1's and $d_2$ 2's among the $*$'s. Since the
position of the row with a 3 does not affect the $\delta$-vector, so
the only variables are $d_1$ and $d_2$. Also, by Corollary
\ref{onegencor}, we have $\delta_{\Pc(A_3)}(t)=1+t^{k_1}+t^{k_2}$,
where
\begin{align*}
k_1=1-\bigg\lfloor\frac{1-d_1-2d_2}{3}\bigg\rfloor \;\text{and} \;\;
k_2=1-\bigg\lfloor\frac{2-2d_1-d_2}{3}\bigg\rfloor.
\end{align*}

Then by the formula, similar to the case of
$\sum_{i=0}^d\delta_i=2$, though a little more complicated, we can
characterize all Hermite normal forms with a given $\delta$-vector.
Let $\delta_{\Pc(A_3)}(t)=1+t^i+t^j$. Set
\begin{align*}
i=1-\left\lfloor\frac{1-d_1-2d_2}{3}\right\rfloor \;\text{and} \;\;
j=1-\left\lfloor\frac{2-2d_1-d_2}{3}\right\rfloor.
\end{align*}
(Later reverse the role of $i$ and $j$ if $i \not= j$, in both
equations and solutions.) After computations, the solutions for
$(d_1,d_2)$ are
\begin{align*}
d^{(1)}=
\begin{cases}
d_1=2j-i \\
d_2=2i-j-1,
\end{cases}
d^{(2)}=
\begin{cases}
d_1=2j-i-1 \\
d_2=2i-j-1
\end{cases}
\text{and} \;\;\; d^{(3)}=
\begin{cases}
d_1=2j-i \\
d_2=2i-j-2.
\end{cases}
\end{align*}
In addition, by the restriction on $(d_1,d_2)$ that
$d_1,d_2 \geq 0$ and $d_1+d_2 \leq d-1$,
we have the following characterizations:

\begin{table}[ht]\label{vol3}
\centering 
\caption{Characterizations for matrices of the form \eqref{3}}
\begin{tabular}{|c| c|c| c|} 
\hline 
$2j$ & $2i $ & $i+j $ & solutions \\
\hline \hline  
$ \ge i$& $ \ge j+1$ & $ \le d$ &$d^{(1)}$\\
 \hline
 $ \ge i+1$& $  \ge j+1$ & $\le d+1$ &$d^{(2)}$\\
 \hline
$ \ge i$& $ \ge j+2$ & $\le d+1$ &$d^{(3)}$\\
 \hline
\end{tabular}
\end{table}

\begin{enumerate}
\item If $2j \geq i, 2i \geq j+1$ and $i+j \leq d$,
then the solution $d^{(1)}$ will work and
this gives all the matrices with this $\delta$-vector.
\item If $2j \geq i+1, 2i \geq j+1$ and $i+j \leq d+1$,
then the solution $d^{(2)}$ will work and
this gives all the matrices with this $\delta$-vector.
\item If $2j \geq i, 2i \geq j+2$ and $i+j \leq d+1$,
then the solution $d^{(3)}$ will work and
this gives all the matrices with this $\delta$-vector.
\item If $\{i,j\}$ in the given vector does not satisfy any of the above cases,
there is no matrix with this vector as its $\delta$-vector.
\end{enumerate}

Again, this result has been obtained in \cite{smallvolume}.
In fact, for example, the inequality $2j \geq i$ means that \eqref{Stanley} holds
and the inequality $i+j \leq d+1$ means that \eqref{Hibi} holds.

Notice that only the solution
\begin{align*}
d^{(2)}=
\begin{cases}
d_1=d-1 \\
d_2=0
\end{cases}
\end{align*}
works when $i=(d+2)/3$ and $j=(2d+1)/3$.
This happens when $d \equiv 1 \; (\mod 3)$
and there is only one matrix with $d_1=d-1$ and $d_2=0$.
Similary, only the solution
\begin{align*}
d^{(3)}=
\begin{cases}
d_1=0 \\
d_2=d-1
\end{cases}
\end{align*}
works when $i=(2d+2)/3$ and $j=(d+1)/3$.
This happens when $d \equiv 2 \; (\mod 3)$
and again, there is only one matrix with $d_1=0$ and $d_2=d-1$.


\subsection{A solution of Problem \ref{normalform}\label{vol4}
when $\sum_{i=0}^d\delta_i = 4$} When the determinant is 4, there
are two cases of Hermite normal forms. One is the Hermite normal
forms \eqref{onegen} with $D=4$, namely,
\begin{align}\label{4}
A_4=\begin{pmatrix}
1 &       &  &  &  &       &  \\
  &\ddots &  &  &  &       &  \\
  &       &1 &  &  &       &  \\
* &\cdots &* &4 &  &       &  \\
  &       &  &  &1 &       &  \\
  &       &  &  &  &\ddots &  \\
  &       &  &  &  &       &1
\end{pmatrix},
\end{align}
where there are $d_1$ 1's, $d_2$ 2's and $d_3$ 3's among $*$'s. The
other one looks like
\begin{align}\label{22}
A_4'=\begin{pmatrix}
1 &       &  &      &  &       &  &  &  &        \\
  &\ddots &  &      &  &       &  &  &  &        \\
  &       &1 &      &  &       &  &  &  &        \\
* &\cdots &* &2     &  &       &  &  &  &        \\
  &       &  &      &1 &       &  &  &  &        \\
  &       &  &      &  &\ddots &  &  &  &        \\
  &       &  &      &  &       &1 &  &  &        \\
\dot{*}&\cdots &\dot{*}&\bar{*}   &\dot{*}&\cdots &\dot{*}&2 &  &        \\
  &       &  &      &  &       &  &  &1 &        \\
  &       &  &      &  &       &  &  &  &\ddots
\end{pmatrix},
\end{align}
where there are $d_1$ 1's (resp. $d_1'$ 1's) among $*$'s (resp.
$\dot{*}$'s), there are $e_1$ 1's (resp. $e_1'$ 1's) among $*$'s
(resp. $\dot{*}$'s) of which the entry of the row of $\dot{*}$ (resp. $*$)
in the same column is 0. Also, set $d_1''=e_1+e_1'$.
(For example, a $6 \times 6$ Hermite normal form
\begin{align*}
\begin{pmatrix}
1 &0 &0 &0 &0 &0 \\
0 &1 &0 &0 &0 &0 \\
0 &0 &1 &0 &0 &0 \\
0 &1 &1 &2 &0 &0 \\
0 &0 &0 &0 &1 &0 \\
1 &1 &0 &1 &1 &2 \\
\end{pmatrix}
\end{align*}
is a matrix \eqref{22} with $d_1=2,d_1'=3,e_1=1,e_1'=2,d_1''=3$ and
$\bar{*}=1$.)

First, we consider the Hermite normal forms \eqref{4}.
Then, by Corollary \ref{onegencor}, we have
$\delta_{\Pc(A_4)}(t)=1+t^{k_1}+t^{k_2}+t^{k_3}$, where
\begin{align*}
k_1=1-\bigg\lfloor\frac{1-d_1-2d_2-3d_3}{4}\bigg\rfloor,
k_2=1-\bigg\lfloor\frac{1-d_1-d_3}{2}\bigg\rfloor \; \text{and} \;
k_3=1-\bigg\lfloor\frac{3-3d_1-2d_2-d_3}{4}\bigg\rfloor.
\end{align*}

Let $\delta_{\Pc(A_4)}(t)=1+t^i+t^j+t^k$. We get three sets of
equations, according to the order of $k_1,k_2$ and $k_3$:
\begin{align*}
i=1-\left\lfloor\frac{1-d_1-2d_2-3d_3}{4}\right\rfloor,
j=1-\left\lfloor\frac{1-d_1-d_3}{2}\right\rfloor \;\text{and} \;
k=1-\left\lfloor\frac{3-3d_1-2d_2-d_3}{4}\right\rfloor.
\end{align*}
(Later replace the roles of $i,j$ and $k$ if any of the three are
distinct.) After computations, the solutions for $(d_1,d_2,d_3)$ are
\begin{align*}
&d^{(1)}=
\begin{cases}
d_1=-i+j+k-1 \\
d_2=i-2j+k \\
d_3=i+j-k-1,
\end{cases}
d^{(2)}=
\begin{cases}
d_1=-i+j+k \\
d_2=i-2j+k \\
d_3=i+j-k-2,
\end{cases}& \\
&d^{(3)}=
\begin{cases}
d_1=-i+j+k \\
d_2=i-2j+k \\
d_3=i+j-k-1
\end{cases}
\text{and} \;\;\; d^{(4)}=
\begin{cases}
d_1=-i+j+k \\
d_2=i-2j+k-1 \\
d_3=i+j-k-1.
\end{cases}&
\end{align*}
In addition, by the restriction on $(d_1,d_2,d_3)$ that
$d_1,d_2,d_3 \geq 0$ and $d_1+d_2+d_3 \leq d-1$,
we have the following characterizations:

\begin{table}[ht]\label{(9)}
\caption{Characterizations for matrices of the form \eqref{4}}
\centering 
\begin{tabular}{|c| c|c| c|} 
\hline 
$j+k$ & $2j $ & $i+j $ & solutions \\
\hline \hline  
$ \ge i+1$& $ \le i+k\le d+1$ & $ \ge k+1$ &$d^{(1)}$\\
 \hline
 $ \ge i$& $  \le i+k\le d+1$ & $\ge k+2$ &$d^{(2)}$\\
 \hline
$ \ge i$& $ \le i+k\le d$ & $\ge k+1$ &$d^{(3)}$\\
 \hline
$ \ge i$& $ \le i+k-1\le d$ & $\ge k+1$ &$d^{(4)}$\\
 \hline
\end{tabular}
\end{table}

\begin{enumerate}
\item If $j+k \geq i+1, 2j \leq i+k \leq d+1$ and $i+j \geq k+1$,
then the solution $d^{(1)}$ will work and
this gives all the matrices with this $\delta$-vector.
\item If $j+k \geq i, 2j \leq i+k \leq d+1$ and $i+j \geq k+2$,
then the solution $d^{(2)}$ will work and
this gives all the matrices with this $\delta$-vector.
\item If $j+k \geq i, 2j \leq i+k \leq d$ and $i+j \geq k+1$,
then the solution $d^{(3)}$ will work and
this gives all the matrices with this $\delta$-vector.
\item If $j+k \geq i, 2j+1 \leq i+k \leq d+1$ and $i+j \geq k+1$,
then the solution $d^{(4)}$ will work and
this gives all the matrices with this $\delta$-vector.
\item If $\{i,j,k\}$ in the given vector does not satisfy any of the above cases,
there is no matrix \eqref{4} with this vector as its $\delta$-vector.
\end{enumerate}

Notice that only the solution
\begin{align*}
d^{(2)}=
\begin{cases}
d_1=0 \\
d_2=0 \\
d_3=d-1
\end{cases}
\end{align*}
works when $i=(3d+3)/4,j=(d+1)/2$ and $k=(d+1)/4$. This happens when
$d \equiv 3 \; (\mod 4)$ and there is only one matrix with
$d_3=d-1$. Similarly, only the solution
\begin{align*}
d^{(1)}=
\begin{cases}
d_1=d-1 \\
d_2=0 \\
d_3=0
\end{cases}
\end{align*}
works when $i=(d+3)/4,j=(d+1)/2$ and $k=(3d+1)/4$.
This happens when $d \equiv 1 \; (\mod 4)$
and again, there is only one matrix with $d_1=d-1$.

Next, we consider the Hermite normal forms \eqref{22}. However, we
need to consider two cases, which are the cases where $\bar{*}=0$ and
$\bar{*}=1$.

First, we consider the case with $\bar{*}=0$. Notice that the
variables are $d_1,d_1'$ and $d_1''$. Obviously we cannot use
Corollary \ref{onegencor}, but we apply Theorem \ref{algo} directly.
Thus we have $\delta_{\Pc(A_4')}(t)=1+t^{k_1}+t^{k_2}+t^{k_3}$,
where
\begin{align*}
k_1=\bigg\lfloor\frac{d_1+2}{2}\bigg\rfloor, \;
k_2=\bigg\lfloor\frac{d_1'+2}{2}\bigg\rfloor \; \text{and} \;
k_3=\bigg\lfloor\frac{d_1''+3}{2}\bigg\rfloor.
\end{align*}

Let $\delta_{\Pc(A_4')}(t)=1+t^i+t^j+t^k$.
We get three sets of equations, according to the order of $k_1,k_2$ and $k_3$:
\begin{align*}
i=\bigg\lfloor\frac{d_1+2}{2}\bigg\rfloor, \;
j=\bigg\lfloor\frac{d_1'+2}{2}\bigg\rfloor \;\text{and}\;
k=\bigg\lfloor\frac{d_1''+3}{2}\bigg\rfloor.
\end{align*}
or replace the role of $i,j$ and $k$ if $i,j$ and $k$ are distinct,
in all equations and solutions. After computations, since
$d_1+d_1'+d_1''$ is even, the solutions of $(d_1,d_1',d_1'')$ are
\begin{align*}
&d^{(1)}=
\begin{cases}
d_1=2i-2 \\
d_1'=2j-1 \\
d_1''=2k-3,
\end{cases}
d^{(2)}=
\begin{cases}
d_1=2i-1 \\
d_1'=2j-2 \\
d_1''=2k-3,
\end{cases}& \\
&d^{(3)}=
\begin{cases}
d_1=2i-1 \\
d_1'=2j-1 \\
d_1''=2k-2
\end{cases}
\text{and} \;\;\; d^{(4)}=
\begin{cases}
d_1=2i-2 \\
d_1'=2j-2 \\
d_1''=2k-2.
\end{cases}&
\end{align*}

In addition, by the restriction on $(d_1,d_1',d_1'')$ that $0 \leq
d_1 \leq d-2$, $0 \leq d_1' \leq d-2$, $0 \leq d_1'' \leq d-2$,
$d_1+d_1'+d_1'' \leq 2(d-2)$, $d_1'' \leq d_1+d_1'$, $d_1' \leq
d_1+d_1''$ and $d_1 \leq d_1'+d_1''$, we have the following
characterizations:

\begin{table}[ht]\label{10-0}
\centering 
\caption{Characterizations for matrices of the form \eqref{22} with
$\bar{*}=0$}
\begin{tabular}{|c| c|c| c|c|c|c|c|} 
\hline 
$i$ & $j$ & $k$ & $i+j$& $i+k$ & $j+k$ & $i+j+k$ & solutions \\
\hline \hline  
$\le\bigg\lfloor\frac{d}{2}\bigg\rfloor$
&$\le\bigg\lfloor\frac{d-1}{2}\bigg\rfloor$
&$\geq 2$,
&$\ge k$ & $\ge j+2$ &$\ge i+1$ & $\le d+1$ & $d^{(1)}$ \\
 & &$\le\bigg\lfloor\frac{d+1}{2}\bigg\rfloor$ & & & & & \\
\hline $\le\bigg\lfloor\frac{d-1}{2}\bigg\rfloor$
&$\le\bigg\lfloor\frac{d}{2}\bigg\rfloor$
&$\geq 2$, &$\ge k$ & $\ge j+1$ &$\ge i+2$ & $\le d+1$ & $d^{(2)}$ \\
 & &$\le\bigg\lfloor\frac{d+1}{2}\bigg\rfloor$ & & & & & \\
\hline $\le\bigg\lfloor\frac{d-1}{2}\bigg\rfloor$
&$\le\bigg\lfloor\frac{d-1}{2}\bigg\rfloor$&
$\le\bigg\lfloor\frac{d}{2}\bigg\rfloor$ & $\ge k$ & $\ge j+1$
&$\ge i+1$ & $\le d$ & $d^{(3)}$\\
\hline $\le\bigg\lfloor\frac{d}{2}\bigg\rfloor$
&$\le\bigg\lfloor\frac{d}{2}\bigg\rfloor$&
$\le\bigg\lfloor\frac{d}{2}\bigg\rfloor$ & $\ge k+1$ & $\ge j+1$
&$\ge i+1$ & $\le d+1$ & $d^{(4)}$\\
\hline
\end{tabular}
\end{table}

\begin{enumerate}
\item If $i \leq \lfloor d/2 \rfloor,\;j \leq \lfloor (d-1)/2 \rfloor,\;
2 \leq k \leq \lfloor (d+1)/2 \rfloor,$
$i+j+k \leq d+1, k \leq i+j, j+2 \leq i+k$ and $i+1 \leq j+k$,
then the solution $d^{(1)}$ will work and
this gives all the matrices with this $\delta$-vector.
\item If $i \leq \lfloor (d-1)/2 \rfloor,\;j \leq \lfloor d/2 \rfloor,\;
2 \leq k \leq \lfloor (d+1)/2 \rfloor,$
$i+j+k \leq d+1, k \leq i+j, j+1 \leq i+k$ and $i+2 \leq j+k$,
then the solution $d^{(2)}$ will work and
this gives all the matrices with this $\delta$-vector.
\item If $k \leq \lfloor d/2 \rfloor,\;i,j \leq \lfloor (d-1)/2 \rfloor,$
$i+j+k \leq d, k \leq i+j, j+1 \leq i+k$ and $i+1 \leq j+k$,
then the solution $d^{(3)}$ will work and
this gives all the matrices with this $\delta$-vector.
\item If $i,j,k \leq \lfloor d/2 \rfloor,$
$i+j+k \leq d+1, k+1 \leq i+j, j+1 \leq i+k$ and $i+1 \leq j+k$,
then the solution $d^{(4)}$ will work and
this gives all the matrices with this $\delta$-vector.
\item If $\{i,j,k\}$ in the given vector does not satisfy any of the above cases,
there is no matrix \eqref{22}, where $\bar{*}=0$,
with this vector as its $\delta$-vector.
\end{enumerate}

Next, we consider the case with $\bar{*}=1$. By Theorem \ref{algo},
we have $\delta_{\Pc(A_4')}(t)=1+t^{k_1}+t^{k_2}+t^{k_3}$, where
\begin{align*}
k_1=1-\bigg\lfloor\frac{1-d_1-2d_1''}{4}\bigg\rfloor, \;
k_2=1-\bigg\lfloor\frac{1-d_1}{2}\bigg\rfloor \; \text{and} \;
k_3=2-\bigg\lfloor\frac{3-d_1-2d_1'}{4}\bigg\rfloor.
\end{align*}

Let $\delta_{\Pc(A_4')}(t)=1+t^i+t^j+t^k$.
We get three sets of equations, according to the order of $k_1,k_2$ and $k_3$:
\begin{align*}
i=1-\bigg\lfloor\frac{1-d_1-2d_1''}{4}\bigg\rfloor, \;
j=1-\bigg\lfloor\frac{1-d_1}{2}\bigg\rfloor \;\text{and}\;
k=2-\bigg\lfloor\frac{3-d_1-2d_1'}{4}\bigg\rfloor.
\end{align*}
or replace the roles of $i,j$ and $k$ if $i,j$ and $k$ are distinct.
After computations, considering $d_1+d_1'+d_1''$ is even, the
solutions of $(d_1,d_1',d_1'')$ are
\begin{align*}
&d^{(1)}=
\begin{cases}
d_1=2j-1 \\
d_1'=2k-j-3 \\
d_1''=2i-j-2,
\end{cases}
d^{(2)}=
\begin{cases}
d_1=2j-1 \\
d_1'=2k-j-2 \\
d_1''=2i-j-1,
\end{cases}& \\
&d^{(3)}=
\begin{cases}
d_1=2j-2 \\
d_1'=2k-j-3 \\
d_1''=2i-j-1
\end{cases}
\text{and} \;\;\; d^{(4)}=
\begin{cases}
d_1=2j-2 \\
d_1'=2k-j-2 \\
d_1''=2i-j-2.
\end{cases}&
\end{align*}
In addition, by the restriction on $(d_1,d_1',d_1'')$ that $0 \leq
d_1 \leq d-2$, $0 \leq d_1' \leq d-2$, $0 \leq d_1'' \leq d-2$,
$d_1+d_1'+d_1'' \leq 2(d-2)$, $d_1'' \leq d_1+d_1'$, $d_1' \leq
d_1+d_1''$ and $d_1 \leq d_1'+d_1''$, we have the following
characterizations:

\begin{table}[ht]\label{10-1}
\centering 
\caption{Characterizations for matrices of the form \eqref{22} with
$\bar{*}=1$}
\begin{tabular}{|c| c|c| c|c|c|c|} 
\hline 
$2k$ & $2i$ & $2j$ & $i+j$& $i+k$ & $j+k$  & solutions \\
\hline \hline  
$\ge j+3$,  & $\ge j+2$,  & $\le d-1$ & $\ge k$
& $\ge 2j+2$, & $\ge i+1$  & $d^{(1)}$\\
$\le d+j+1$ & $\le d+j$   &           &          & $\le d+1$
  &     &\\

 \hline
 $\ge j+2$, & $\ge j+1$,  & $\le d-1$ &
$\ge k$
& $\ge 2j+1$, & $\ge i+1$   & $d^{(2)}$\\
$\le d+j$ & $\le d+j-1$ & & & $\le d$ & &\\
\hline $\ge j+3$,  & $\ge j+1$,  & $\le d$ & $\ge k$
& $\ge 2j+1$, & $\ge i+2$  & $d^{(3)}$\\
$\le d+j+1$ & $\le d+j-1$ &&& $\le d+1$ & &\\
\hline $\ge j+2$,  & $\ge j+2$,  & $\le d$ & $\ge k+1$
& $\ge 2j+1$,& $\ge i+1$  & $d^{(4)}$\\
$\le d+j$& $\le d+j$ && & $\le d+1$  & &\\
\hline
\end{tabular}
\end{table}

\begin{enumerate}
\item If $j+3 \leq 2k \leq d+j+1, j+2 \leq 2i \leq d+j, 2j \leq d-1$,
$2j+2 \leq i+k \leq d+1$, $i+1 \leq j+k$ and $k \leq i+j$,
then the solution $d^{(1)}$ will work and
this gives all the matrices with this $\delta$-vector.
\item If $j+2 \leq 2k \leq d+j, j+1 \leq 2i \leq d+j-1, 2j \leq d-1$,
$2j+1 \leq i+k \leq d$, $i+1 \leq j+k$ and $k \leq i+j$,
then the solution $d^{(2)}$ will work and
this gives all the matrices with this $\delta$-vector.
\item If $j+3 \leq 2k \leq d+j+1, j+1 \leq 2i \leq d+j-1, 2j \leq d$,
$2j+1 \leq i+k \leq d+1$, $i+2 \leq j+k$ and $k \leq i+j$
then the solution $d^{(3)}$ will work and
this gives all the matrices with this $\delta$-vector.
\item If $j+2 \leq 2k \leq d+j, j+2 \leq 2i \leq d+j, 2j \leq d$,
$2j+1 \leq i+k \leq d+1$, $i+1 \leq j+k$ and $k+1 \leq i+j$
then the solution $d^{(4)}$ will work and
this gives all the matrices with this $\delta$-vector.
\item If $\{i,j,k\}$ in the given vector does not satisfy any of the above cases,
there is no matrix \eqref{22} with this vector as its $\delta$-vector.
\end{enumerate}

Notice that only the solution
\begin{align*}
d^{(3)}=
\begin{cases}
d_1=d-2 \\
d_1'=d-2 \\
d_1''=0
\end{cases}
\end{align*}
works when $i=(d+2)/4,j=d/2$ and $k=(3d+2)/4$. This happens when $d
\equiv 2 \;(\mod 4)$ and there is only one matrix with
$d_1=d_1'=d-2$. Similarly, only the solution
\begin{align*}
d^{(4)}=
\begin{cases}
d_1=d-2 \\
d_1'=0 \\
d_1''=d-2
\end{cases}
\end{align*}
works when $i=3d/4,j=d/2$ and $k=d/4+1$. This happens when $d \equiv
0 \;(\mod 4)$ and again, there is only one matrix with
$d_1=d_1''=d-2$.

\section{The classification of the possible $\delta$-vectors with $\sum_{i=0}^d\delta_i = 4$}
In this section, in consequence, we classify the possible $\delta$-vectors with
$\sum_{i=0}^d\delta_i = 4$ by results from Section \ref{vol4}.

Let $1+t^{i_1}+t^{i_2}+t^{i_3}$  with $1 \leq i_1 \leq i_2 \leq i_3 \leq d$
be a $\delta$-polynomial for some integral polytope and
$(\delta_0,\delta_1,\ldots,\delta_d)$ a sequence of the coefficients of
this polynomial, where it is clear that $\delta_0=1$ and $\sum_{i=0}^d\delta_i=4$.
Assume that $(\delta_0,\delta_1,\ldots,\delta_d)$ satisfies
the inequalities \eqref{Stanley}, \eqref{Hibi} and $\delta_1 \geq \delta_d$,
which are necessary conditions to be a possible $\delta$-vector.
Then \eqref{Stanley} and \eqref{Hibi} lead into
the following inequalities  that $(i_1,i_2,i_3)$ satisfies
\begin{align}\label{necessary}
i_3 \leq i_1+i_2,\; i_1+i_3 \leq d+1 \;\;\text{and}\;\;
i_2 \leq \lfloor(d+1)/2\rfloor.
\end{align}

Finally, the classification of possible $\delta$-vectors of integral
polytopes with $\sum_{i=0}^d\delta_i=4$ is given by the following
\begin{Theorem}\label{case4}
Let $1+t^{i_1}+t^{i_2}+t^{i_3}$ be a polynomial with $1 \leq i_1
\leq i_2 \leq i_3 \leq d$. Then there exists an integral polytope
$\Pc \subset \RR^d$ of dimension $d$ whose $\delta$-polynomial
equals
 $1+t^{i_1}+t^{i_2}+t^{i_3}$ if and only if $(i_1,i_2,i_3)$
satisfies \eqref{necessary} and an additional condition
\begin{equation}\label{add}
2i_2 \leq i_1 + i_3 \text{  or  }i_2+i_3 \leq d+1.
\end{equation}
Moreover, all these polytopes can be chosen to be simplices.
\end{Theorem}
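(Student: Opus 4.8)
The plan is to reduce the whole statement to a single equivalence about simplices and then read off both directions from it. Write $\delta_{\Pc}(t)=1+t^{i_1}+t^{i_2}+t^{i_3}$ with $1\le i_1\le i_2\le i_3\le d$, and let me aim to establish

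\smallskip
$(\ast)$ an integral simplex of normalized volume $4$ realizes $1+t^{i_1}+t^{i_2}+t^{i_3}$ if and only if $(i_1,i_2,i_3)$ satisfies \eqref{necessary} together with \eqref{add}.
\smallskip

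Granting $(\ast)$, the theorem falls out. For the ``if'' direction and the claim that simplices suffice, I would invoke the backward implication of $(\ast)$: any $(i_1,i_2,i_3)$ obeying \eqref{necessary} and \eqref{add} is realized by an explicit Hermite normal form of determinant $4$, hence by a simplex. For the ``only if'' direction I would split on $i_1$. If $i_1\ge 2$ then $\delta_1=0$, so by the observation in the introduction every realizing polytope is already a simplex, and the forward implication of $(\ast)$ yields \eqref{necessary} and \eqref{add}. If $i_1=1$ the polytope need not be a simplex, but here \eqref{necessary} forces $i_3\in\{i_2,i_2+1\}$, and a direct check shows that the only tuples satisfying \eqref{necessary} yet violating \eqref{add} form the family $d$ odd, $(i_1,i_2,i_3)=(1,(d+1)/2,(d+3)/2)$; taking $i=(d-1)/2$ in \eqref{Hibi} then forces $\delta_{(d+1)/2}+\delta_{(d+3)/2}=2\le\delta_{(d+1)/2}=1$, a contradiction, so every realizable vector with $i_1=1$ already satisfies \eqref{add} by \eqref{Hibi} alone. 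The necessity of \eqref{necessary} itself is exactly the translation of \eqref{Stanley} and \eqref{Hibi} recorded just before the theorem.

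To prove $(\ast)$ I would assemble the complete classification of determinant-$4$ Hermite normal forms from Section \ref{vol4}. Since $4=4$ and $4=2\cdot 2$ are the only multiplicative splittings of the diagonal, every such normal form is, up to the irrelevant placement of its special rows, of type \eqref{4} or of type \eqref{22} with $\bar{*}\in\{0,1\}$, and the three tables there give, for each type, the exact inequalities on the exponents that make a prescribed $1+t^{i}+t^{j}+t^{k}$ occur. The remaining task is bookkeeping: sort the three exponents produced by Corollary \ref{onegencor} and Theorem \ref{algo} into $i_1\le i_2\le i_3$, and show that the union over the three types and over all orderings of $(i,j,k)$ of the tabulated conditions collapses precisely to \eqref{necessary} and \eqref{add}. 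I expect the clean pattern to be that the single-row form \eqref{4} contributes exactly the tuples satisfying the Stanley-type alternative $2i_2\le i_1+i_3$, while the forms \eqref{22} contribute exactly those satisfying the Hibi-type alternative $i_2+i_3\le d+1$, so that their union is the disjunction \eqref{add}; I would confirm this by simplifying the floor expressions $k_1,k_2,k_3$ in each case.

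The main obstacle is exactly this collapse inside $(\ast)$. The difficulty is combinatorial rather than conceptual: each of the three tables already carries four sub-cases $d^{(1)},\dots,d^{(4)}$ with several simultaneous inequalities, and one must track how the unsorted exponents coming from the floor formulas get reordered into $i_1\le i_2\le i_3$, checking that no tuple is double-counted, none is lost at the boundary (the equality cases and the parity conditions $d\equiv r\ (\mathrm{mod}\ 4)$ that isolate the unique ``extremal'' matrices flagged in Section \ref{vol4}), and that the two alternatives of \eqref{add} glue correctly across types. A smaller but necessary point is the completeness claim underlying $(\ast)$: that types \eqref{4} and \eqref{22} exhaust all determinant-$4$ normal forms and that the positions of the distinguished rows and the value of $\bar{*}$ do not alter the $\delta$-vector, so that the finitely many shapes treated in Section \ref{vol4} genuinely cover every volume-$4$ simplex.
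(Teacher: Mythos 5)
Your overall strategy is the paper's: realize admissible tuples by determinant-$4$ Hermite normal forms using the tables of Section \ref{vol4}, and for necessity reduce to simplices via $\delta_1=0$, treating $i_1=1$ separately by showing that \eqref{necessary} together with the failure of \eqref{add} forces $d$ odd and $(i_1,i_2,i_3)=(1,(d+1)/2,(d+3)/2)$, which contradicts \eqref{Hibi}; that reduction and that subcase are correct and coincide with what the paper does inside its case $i_1<i_2<i_3$. The gap is that the statement you call $(\ast)$ --- the equivalence for simplices --- is the entire content of the theorem, and you do not prove it: you defer it to ``bookkeeping'' and instead conjecture how the bookkeeping will come out. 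That conjecture is wrong. You predict that the one-row form \eqref{4} accounts exactly for the alternative $2i_2\le i_1+i_3$ and that the forms \eqref{22} account exactly for the alternative $i_2+i_3\le d+1$, so that \eqref{add} arises as a union across types. In fact the paper's sufficiency argument never uses \eqref{22} at all: every admissible tuple is realized by a form \eqref{4}, by splitting into the four equality patterns $i_1=i_2=i_3$, $i_1<i_2=i_3$, $i_1=i_2<i_3$, $i_1<i_2<i_3$, choosing in each pattern a specific assignment of the sorted exponents to the unsorted $(i,j,k)$ of Table 2 and a specific solution among $d^{(1)},d^{(2)},d^{(4)}$ (with the two alternatives of \eqref{add} in the strict case $i_1<i_2<i_3$ both handled by $d^{(2)}$ under two different assignments). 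The forms \eqref{22}, with $\bar{*}=0$ and $\bar{*}=1$, enter only in the nonexistence half, where one must check that a tuple violating both alternatives of \eqref{add} fails every row of all three tables under every assignment of $(i,j,k)$. Note also that in the three degenerate patterns \eqref{add} is an automatic consequence of \eqref{necessary} (e.g.\ $i_2=i_3$ gives $i_2+i_3=2i_2\le d+1$), so the whole burden of \eqref{add} sits in the case $i_1<i_2<i_3$; your write-up does not isolate this.

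Since the disjunction in \eqref{add} does not split along the type of normal form, the ``collapse'' you hope for has a different shape than you describe, and the verification cannot be waved through: one genuinely has to run the inequality checks of the form ``$j+k\ge i$, $2j\le i+k\le d+1$, $i+j\ge k+2$'' for each table row and each ordering, both to produce a witness in the sufficiency direction and to exclude all witnesses in the necessity direction. Until those checks are written out, the proposal establishes only the frame of the argument, not the theorem.
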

\begin{proof}
There are four cases: (1) $i_1=i_2=i_3$, (2) $i_1<i_2=i_3$, (3)
$i_1=i_2<i_3$, (4) $i_1<i_2<i_3$. We will show that in each case
\eqref{necessary} together with \eqref{add} are the necessary and
sufficient conditions for $1+t^{i_1}+t^{i_2}+t^{i_3}$ to be the
$\delta$-vector of some integral polytope.

(1) Assume $i_1=i_2=i_3=\ell$. By the inequalities (\ref{necessary}),
we have $1 \leq \ell \leq \lfloor(d+1)/2\rfloor$.
Set $i=j=k=\ell$. We have
\begin{align}\label{condition}
j+k \geq i+1, \; 2j \leq i+k \leq d+1 \; \text{and} \; i+j \geq k+1.
\end{align} Thus, by our result on the classification of the case
of a matrix \eqref{4} (Table 2, the solution $d^{(1)}$), there
exists an integral simplex whose
$\delta$-vector is $(1,0,\ldots,0,3,0,\ldots,0)$.

On the other hand, if there exists an integral polytope with this
$\delta$-vector, then \eqref{necessary} holds since it is a
necessary condition. In this case, it follows that both inequalities
in \eqref{add} hold.

(2) Assume $\ell=i_1<i_2=i_3=\ell'$. By \eqref{necessary},
we have $1 \leq \ell < \ell' \leq \lfloor(d+1)/2\rfloor$.
Let $j=\ell$ and $i=k=\ell'$. Then the inequalities \eqref{condition} hold.
Thus there exists an integral simplex whose $\delta$-vector is
$(1,0,\ldots,0,1,0,\ldots,0,2,0,\ldots,0)$.

On the other hand, we have \eqref{necessary}.
Then, $i_2+i_3 \leq d+1$ follows from  $i_2 \leq \lfloor(d+1)/2\rfloor$.

(3) Assume $\ell=i_1=i_2<i_3=\ell'$. Set $i=\ell'$ and $j=k=\ell$.
Then it follows from \eqref{necessary} that
$$j+k \geq i, 2j+1 \leq i+k \leq d+1 \; \text{and} \; i+j \geq k+1.$$
Thus, by our result (Table 2, the solution $d^{(4)}$), there exists
an integral simplex whose $\delta$-vector is
$(1,0,\ldots,0,2,0,\ldots,0,1,0,\ldots,0)$.

On the other hand, if there exists an integral polytope with this
$\delta$-vector, then \eqref{necessary} holds.
In this case, it follows that both inequalities
in \eqref{add} hold.

(4) Assume $1\le i_1<i_2<i_3\le d$. Suppose $2i_2 \leq i_1+i_3$
holds. Set $i=i_3, j=i_2$ and $k=i_1$. Then we have $j+k=i_1+i_2
\geq i_3=i$, $2j=2i_2 \leq i_1+i_3 =i+k \leq d+1$ and $i+j=i_2+i_3
\geq 2i_2+1 \geq 2i_1+3>i_1+2=k+2.$ Thus, by our result (Table 2,
the solution $d^{(2)}$), there exists an integral simplex whose
$\delta$-vector is
$(1,0\dots,0,1,0,\dots,0,1,0,\dots,0,1,0,\dots,0)$.

Suppose  $i_2+i_3 \leq d+1$ holds. Set $i=i_3,j=i_1$ and $k=i_2$.
Then we have $j+k=i_1+i_2 \geq i_3 =i$, $2j=2i_1<i_2+i_3=i+k \leq
d+1$ and $i+j=i_1+i_3 \geq i_1+i_2+1 \geq i_2+2 = k+2$. Thus, by our
result (Table 2, the solution $d^{(2)}$), there exists an integral
simplex whose $\delta$-vector coincides with
$(\delta_0,\delta_1,\ldots,\delta_d)$.

On the other hand, assume the contrary of \eqref{add}: both $2i_2 >
i_1+i_3$ and $i_2+i_3 > d+1$ hold.
We claim that there exists no integral polytope $\Pc$ with
this $\delta$-vector. First we want to show that if there exists
such a polytope, it must be a simplex. Note that the $\delta$-vector
satisfies \eqref{necessary}. Suppose $i_1=1$. It then follows from
\eqref{necessary} and $i_2+i_3 > d+1$ that $i_2=(d+1)/2$ and
$i_3=(d+3)/2$. However, this contradicts \eqref{Hibi}. Therefore
$i_1>1$, and thus $\delta_1=0$. By an explanation after equation
\eqref{delta}, $\Pc$ must be a simplex.
Now we can apply our characteristic results for simplices.

If we set $j=i_3$, then $2j=2i_3 > i_1+i_2=i+k$. If we set $j=i_2$,
then $2j=2i_2 > i_1+i_3=i+k$. If we set $j=i_1$, then $i+k=i_2+i_3
> d+1.$ In any case there does not exist an Hermite normal form \eqref{4} whose
$\delta$-vector coincides with
$(\delta_0,\delta_1,\ldots,\delta_d)$.

Moreover, since $i+j+k=i_1+i_2+i_3 >i_2+i_3 > d+1$, there does not
exist an Hermite normal form \eqref{22} with $\bar{*}=0$ whose
$\delta$-vector coincides with
$(\delta_0,\delta_1,\ldots,\delta_d)$.

In addition, if we set $j=i_3$, then $2j=2i_3>i_1+i_2=i+k$. If we
set $j=i_2$, then $2j=2i_2>i_1+i_3=i+k$. If we set $j=i_1$, then
$i+k=i_2+i_3>d+1.$ Thus there does not exist an Hermite normal form
\eqref{22} with $\bar{*}=1$ whose $\delta$-vector coincides with
$(\delta_0,\delta_1,\ldots,\delta_d)$.
\quad\quad\quad\quad\quad\quad\quad\quad\quad
\quad\quad\quad\quad\quad\quad\quad\quad\quad\quad\quad\quad
\end{proof}

\begin{Examples}{\em
(a) We consider the integer sequence $(1,0,1,1,0,1,0)$.
Then one has $i_1=2,i_2=3,i_3=5$ and $d=6$.
Since \eqref{Stanley} and \eqref{Hibi} are satisfied and
$2i_2 \leq i_1+i_3$ holds, there is an integral polytope
whose $\delta$-vector coincides with $(1,0,1,1,0,1,0)$ by Theorem \ref{case4}.
In fact, let $M \in \ZZ^{6 \times 6}$ be the Hermite normal form \eqref{4}
with $(d_1,d_2,d_3)=(0,1,4)$ or $(0,0,5)$.
Then we have $\delta(\Pc(M))=(1,0,1,1,0,1,0)$. \\
(b) There is no integral polytope with its $\delta$-vector $(1,0,1,0,1,1,0,0)$
since we have $2i_2 > i_1+i_3$ and $i_2+i_3 > d+1$,
although this integer sequence satisfies \eqref{Stanley} and \eqref{Hibi}.
(This example is described in \cite[Example 1.2]{smallvolume}
as a counterexample of \cite[Theorem 0.1]{smallvolume}
for the case where $\sum_{i=0}^d \delta_i=4$.)
However, there exists an integral polytope
with its $\delta$-vector $(1,0,1,0,1,1,0,0,0)$
since $i_2+i_3=d+1$ holds.
}\end{Examples}

\begin{Remark}\label{simplex}{\em
From the above proof, we can see that when $\sum_{i=0}^d\delta_i=4$,
all the possible $\delta$-vectors can be obtained by simplices. This
is also true for all $\delta$-vectors with $\sum_{i=0}^d\delta_i\le 3$,
from the proof of \cite[Theorem 0.1]{smallvolume}. However,
when $\sum_{i=0}^d\delta_i=5$, the $\delta$-vector $(1,3,1)$ cannot
be obtained from any simplex, while it is a possible $\delta$-vector
of a 2-dimensional integral polygon. In fact, suppose that $(1,3,1)$
can be obtained from a simplex. Since $\min\{i:\delta_i \not= 0, i>0\}=1$
and $\max\{i:\delta_i \not= 0\}=2$, one has
$\min\{i:\delta_i \not= 0,i>0\}=3-\max\{i:\delta_i \not= 0\}$,
which implies that the assumption of \cite[Theorem 2.3]{shifted} is satisfied.
Thus the $\delta$-vector must be shifted symmetric, a contradiction.
}\end{Remark}

\end{document}